\shorttitle{Representations of max-stable processes based on single extreme events}
\newcommand{\R}{\mathbb R}
\newcommand{\Nb}{\mathbb N}
\newcommand{\Z}{\mathbb Z}
\newcommand{\1}{\mathbf 1}
\newcommand{\0}{\mathbf 0}
\newcommand{\Q}{Q}
\newcommand{\Xbf}{\mathbf X}
\newcommand{\Wbf}{\mathbf W}
\newcommand{\Vbf}{\mathbf V}
\newcommand{\sbf}{\mathbf s}
\newcommand{\xbf}{\mathbf x}
\newcommand{\zbf}{\mathbf z}
\newcommand{\Scal}{\mathcal S}
\DeclareMathOperator{\supp}{supp}
\DeclareMathOperator{\discr}{discr}
\DeclareMathSymbol{\optimes}{\mathbin}{symbols}{"02}
\renewcommand{\vec}{\mathbf}
\DeclareMathOperator*{\argmax}{arg\,max}
\DeclareMathOperator*{\argsup}{arg\,sup}
\begin{document}

\title{Representations of max-stable processes\\ based on single extreme events}

\authorone[Georg-August-Universit\"at G\"ottingen]{Sebastian~Engelke}
\authortwo[Georg-August-Universit\"at G\"ottingen]{Alexander~Malinowski}
\authorthree[Universit\"at Mannheim]{Marco~Oesting}
\authorfour[Universit\"at Mannheim]{Martin~Schlather}
\addressone{Institut f\"ur Mathematische Stochastik,               
              Goldschmidtstr.~7, 37077 G\"ottingen, Germany,
              \texttt{sengelk@uni-goettingen.de}}
\addresstwo{Institut f\"ur Mathematische Stochastik,               
              Goldschmidtstr.~7, 37077 G\"ottingen, Germany, 
              \texttt{malinows@math.uni-goettingen.de}}
\addressthree{Institut f\"ur Mathematik, Universit\"at Mannheim, 
              A 5, 6, 68131 Mannheim, Germany, 
              \texttt{oesting@math.uni-mannheim.de}}              
\addressfour{Institut f\"ur Mathematik, Universit\"at Mannheim, 
              A 5, 6, 68131 Mannheim, Germany, 
              \texttt{schlather@math.uni-mannheim.de}}

\begin{abstract}
This paper provides the basis for new methods of inference for
max-stable processes $\xi$ on general spaces that admit a certain
incremental representation, which, in important cases, has a much
simpler structure than the max-stable process itself. 
A corresponding 
peaks-over-threshold approach will incorporate
all single events that are extreme in some sense
and will therefore rely on a substantially larger amount
of data in comparison to estimation procedures based on block
maxima. \\ Conditioning a process $\eta$ in the max-domain of
attraction of $\xi$ on being \emph{extremal}, several convergence
results for the increments of $\eta$ are proved.  In a similar way,
the shape functions of mixed moving maxima (M3) processes can be
extracted from suitably conditioned single events $\eta$.  Connecting
the two approaches, transformation formulae for processes that
admit both an incremental and an M3 representation are identified.

\keywords{extreme value statistics; incremental representation; max-stable process; mixed moving maxima; peaks-over-threshold; weak convergence on function space}
\ams{60G70}{62G32; 62E20}
\end{abstract}

\section{Introduction}

The joint extremal behavior at multiple locations of some random
process $\{\eta(t): t\in T\}$, $T$ an arbitrary index set, can be
captured via its limiting \emph{max-stable process}, assuming the
latter exists and is non-trivial everywhere. Then, for independent
copies $\eta_i$ of $\eta$, $i\in\Nb$, the functions $b_n: T \to \R$,
$c_n : T\to (0,\infty)$ can be chosen such that the convergence
\begin{align}\label{MDA}
  \xi(t) = \lim_{n\to\infty} c_n(t) \Big(\max_{i=1}^n \eta_i(t) - b_n(t)\Big), 
  \quad t\in T,
\end{align}
holds in the sense of finite-dimensional distributions. 
The process $\xi$ is said
to be \emph{max-stable} and $\eta$ is in its max-domain of attraction
(MDA).  The theory of max-stable processes is mainly concerned with the
dependence structure while the marginals are usually assumed to be
known. Even for finite-dimensional max-stable distributions, the space
of possible dependence structures is uncountably infinite-dimensional
and parametric models are required to find a balance between
flexibility and analytical tractability \cite{deh2006a,res2008}.

A general construction principle for max-stable processes was provided
by \cite{ deh1984,smi1990}: Let $\sum_{i\in\Nb} \delta_{(U_i, S_i)}$
be a Poisson point process (PPP) on $(0,\infty)\times\Scal$ with
intensity measure $u^{-2}\rd u\cdot \nu(\rd s)$, where $(\Scal,
\mathfrak S)$ is an arbitrary measurable space and $\nu$ a positive
measure on $\Scal$. Further, let $f:\Scal\times T \to [0, \infty)$ be
  a non-negative function with $\int_{\Scal} f(s,t) \nu(\rd s) = 1$
  for all $t\in T$.  Then the process
\begin{align}
\xi(t) = \max_{i\in\Nb} U_i f(S_i, t), \quad t\in T,\label{constr_max_stable}
\end{align}
is max-stable and has standard Fr\'echet margins with distribution
function $\exp(-1/x)$ for $x \geq 0$.  In this paper, we restrict to
two specific choices for $f$ and $(\Scal, \mathfrak S, \nu)$ and
consider processes that admit one of the resulting representations.
First, let $\{W(t) : t\in T\}$ be a non-negative stochastic process
with $\sE W(t) = 1$, $t\in T$, and $W(t_0) = 1$ a.s.\ for some point
$t_0 \in T$.  The latter condition means that $W(t)$ simply describes
the multiplicative increment of $W$ w.r.t.\ the location $t_0$. For
$(\Scal, \mathfrak S, \nu)$ being the canonical probability space for
the sample paths of $W$ and with $f(w,t)=w(t)$, $w\in\Scal$, $t\in T$,
we refer to
\begin{align}
  \label{def_xi}
  \xi(t) = \max_{i\in\Nb} U_i W_i(t), \quad t\in T,
\end{align}
as the \emph{incremental representation} of $\xi$, where
$\{W_i\}_{i\in\Nb}$ are independent copies of $W$.  Since $T$ is an
arbitrary index set, the above definition covers multivariate extreme
value distributions, i.e. $T=\{t_1,\dots,t_k\}$, as well as max-stable
random fields, i.e. $T = \R^d$.\\ For the second specification, let
$\{F(t): \ t \in \R^d\}$ be a stochastic process with sample paths in
the space $C(\R^d)$ of non-negative continuous functions, such that
\begin{align}
  \label{assumption_integral}
  \textstyle \sE \int_{\R^d}  F(t) \rd t = 1.
\end{align}
With $S_i = (T_i,F_i)$, $i\in\Nb$, in $\Scal = \R^d\times C(\R^d)$,
intensity measure $\nu(\rd t \times \rd g)=\rd t\sP_F(\rd g)$ and
$f((t,g), s)=g(s-t)$, $(t,g)\in\Scal$, we obtain the class of
\emph{mixed moving maxima (M3) processes}
\begin{align}
  \xi(t) = \max_{i\in\Nb} U_i F_i(t- T_i), \quad t\in\R^d. \label{def_M3}
\end{align}
These processes are max-stable and stationary on $\R^d$ (see for
instance \cite{wan2010}).  The function $F$ is called \emph{shape
  function of $\xi$} and can also be deterministic (e.g., in case of
the Smith process).  In Smith's ``rainfall-storm'' interpretation
\cite{smi1990}, $U_i$ and $T_i$ are the strength and center point of
the $i$th storm, respectively, and $U_i F_i(t- T_i)$ represents the
corresponding amount of rainfall at location $t$. In this case,
$\xi(t)$ is the process of extremal precipitation.

When i.i.d.\ realizations $\eta_1, \ldots, \eta_n$ of $\eta$ in the
MDA of a max-stable process $\xi$ are observed, a classical approach
for parametric inference on $\xi$ is based on generating (approximate)
realizations of $\xi$ out of the data $\eta_1, \ldots, \eta_n$ via
componentwise block maxima and applying maximum likelihood (ML)
estimation afterwards.  A clear drawback of this method is that it
ignores all information on large values that is contained in the order
statistics below the within-block maximum. Further, ML estimation
needs to evaluate the multivariate densities while for many max-stable
models only the bivariate densities are known in closed form. Thus,
composite likelihood approaches have been proposed
\cite{pad2010,dav2012}. \\ In univariate extreme-value theory, the
second standard procedure estimates parameters by fitting a certain
PPP to the \emph{peaks-over-thresholds} (POT), i.e., to the empirical
process of exceedances over a certain critical value
\cite{lea1991,emb1997}.  Also in the multivariate framework we can
expect to profit from using all extremal data via generalized POT
methods instead of aggregated data.  In contrast to the ML approach,
in this paper, we assume that $\xi$ admits one of the two
representations \eqref{def_xi} and \eqref{def_M3} and we aim at
extracting realizations of the processes $W$ and $F$, respectively,
from \emph{single extreme events}. Here, the specification of a single
extreme event will depend on the respective representation.  \\ In
\cite{eng2012a}, this concept is applied to derive estimators for the
class of Brown-Resnick processes \cite{bro1977, kab2009}, which have
the form \eqref{def_xi} by construction. With $a(n)$ being a sequence
of positive numbers with $\lim_{n\to\infty} a(n) = \infty$, the
convergence in distribution
\begin{align}
    \Bigg( \frac{\eta(t_1)}{\eta(t_0)}, \ldots, 
    \frac{\eta(t_k)}{\eta(t_0)} 
    \ \Bigg|\ 
    \eta(t_0) > a(n) \Bigg)\cvgdist \bigl(
    W(t_1),\dots, W(t_k) \bigr),
\label{cond_incr_conv}
\end{align}
$t_0,t_1,\dots,t_k\in T$, $k\in \Nb$, is established for $\eta$ being
in the MDA of a Brown-Resnick process and with $W$ being the
corresponding log-Gaussian random field.  A similar approach exists in
the theory of homogeneous discrete-time Markov chains. For instance,
\cite{seg2007} and \cite{ehl2011} investigate the behavior of a Markov
chain $\{M(t): t\in \Z\}$ conditional on the event that $M(0)$ is
large. The resulting extremal process is coined the tail chain and
turns out to be Markovian again.  In this paper, the convergence
result \eqref{cond_incr_conv} is generalized in different
aspects. Arbitrary non-negative processes $\{W(t) : t\in T\}$ with
$\sE W(t) = 1$, $t\in T$, are considered,  and convergence of the
conditional increments of $\eta$ in the sense of finite-dimensional
distributions as well as weak convergence in continuous function
spaces is shown (Theorems \ref{theo_cond_increments_general} and
\ref{theo_cond_increments_cont}).  Moreover, in Section
\ref{M3representation}, similar results are established for M3
processes \eqref{def_M3} by considering realizations of $\eta$ around
their (local) maxima.  Since one and the same max-stable process $\xi$
might admit both representations \eqref{def_xi} and \eqref{def_M3}
we provide formulae for switching between them in Section
\ref{sec:switching}. Section~\ref{sec:application} gives an exemplary
outlook on how our results can be applied for statistical inference.

\section{Incremental representation}
\label{examples_increment_representation}
Throughout this section, we suppose that $\{\xi(t): \ t\in T\}$, where
$T$ is an arbitrary index set, is normalized to standard Fr\'echet
margins and admits a representation
\begin{align}
  \label{def_xi2}
  \xi(t) = \max_{i\in\Nb} U_i V_i(t), \quad t\in T,
\end{align}
where $\sum_{i\in\Nb}\delta_{U_i}$ is a PPP on $(0,\infty)$ with
intensity $u^{-2}du$, which we call \emph{Fr\'echet point process} in
the following.  The $\{V_i\}_{i\in\Nb}$ are independent copies of a
non-negative stochastic process $\{V(t): \ t\in T\}$ with $\sE V(t) =
1$, $t\in T$.  Note that \eqref{def_xi2} is slightly less restrictive
than the representation \eqref{def_xi} in that we do not require that
$V(t_0)=1$ a.s.\ for some $t_0\in T$.  For any fixed $t_0\in T$, we
have
\begin{align}
  \label{decomp_V}
  \xi(t) \eqdist \max_{i\in\Nb} U_i 
  \left(\1_{P_i=0}V^{(1)}_i(t) + \1_{P_i=1}V^{(2)}_i(t)\right), 
  \quad t\in T,
\end{align}
where $\{P_i\}_{i\in\Nb}$ are i.i.d.\ Bernoulli variables with
parameter $p=\sP(V(t_0) = 0)$ and the $V^{(1)}_i$ and $V^{(2)}_i$ are
independent copies of the process $\{V(t): \ t\in T\}$, conditioned on
the events $\{V(t_0) > 0\}$ and $\{V(t_0)= 0\}$, respectively.

Note that for $k\in\Nb$, $t_0, \ldots, t_k\in T$, the vector $\Xi =
(\xi(t_0),\dots,\xi(t_k))$ follows a $(k+1)$-variate extreme-value
distribution and its distribution function $G$ can therefore be
written as
\begin{align}
  \label{def_mu}
  G(\mathbf{x}) = \exp( -\mu( [\0,\mathbf{x}]^C) ), 
  \quad \mathbf{x} \in \R^{k+1},
\end{align}
where $\mu$ is a measure on $E = [0,\infty)^{k+1}\setminus\{\0\}$, the
so-called \emph{exponent measure} of $G$ \cite[Prop.\ 5.8]{res2008}, 
and $[\0,\mathbf{x}]^C = E\setminus
  [\0,\mathbf{x}]$.

The following convergence result provides the theoretical
foundation for statistical inference based on the incremental process
$V$.

\begin{thm}\label{theo_cond_increments_general} 
  Let $\{\eta(t): \ t\in T\}$ be non-negative and in the MDA of some
  max-stable process $\xi$ that admits a representation
  \eqref{def_xi2} and suppose that $\eta$ is normalized such that
  \eqref{MDA} holds with $c_n(t) = 1/n$ and $b_n(t) = 0$ for $n\in\Nb$
  and $t\in T$.  Let $a(n)\to\infty$ as $n\to\infty$.  For $k\in \Nb$
  and $t_0,\dots,t_k\in T$ we have the convergence in distribution on
  $\R^{k+1}$
  \begin{align*}
    \left(\frac{\eta(t_0)}{a(n)}, \frac{\eta(t_1)}{\eta(t_0)} ,\dots, 
    \frac{\eta(t_k)}{\eta(t_0)} 
    \ \Bigg|\ 
    \eta(t_0) > a(n)\right) \cvgdist
    \left(Z, \Delta\mathbf{\tilde{V}}^{(1)}\right),\quad n\to \infty,
  \end{align*}
  where the distribution of $\Delta\mathbf{\tilde{V}}^{(1)}$ is given by
  \begin{align}
    \sP(\Delta\mathbf{\tilde{V}}^{(1)}\in d \mathbf z) = 
    (1-p)\sP(\Delta\mathbf V^{(1)}\in d \mathbf z)
    \sE\bigl( V^{(1)}(t_0) \big| 
    \Delta\mathbf V^{(1)}=\mathbf z \bigr), \quad \mathbf{z} \geq \0.
    \label{density_increment}
  \end{align}
  Here, $\Delta\mathbf V^{(1)}$ denotes the vector of increments
  $\left(\frac{V^{(1)}(t_1)}{V^{(1)}(t_0)}, \ldots,
  \frac{V^{(1)}(t_k)}{V^{(1)}(t_0)}\right)$ with respect to $t_0$, and
  $Z$ is an independent Pareto variable.
\end{thm}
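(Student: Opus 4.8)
\medskip
\noindent\emph{Proof strategy.} The plan is to recast the max-domain-of-attraction hypothesis as a statement about the exponent measure $\mu$ of $\xi$, to express the conditional law in the theorem as a quotient of $\mu$-masses, and finally to evaluate that quotient directly from the representation \eqref{def_xi2}. The first step is to note that \eqref{MDA} with $c_n(t)=1/n$ and $b_n(t)=0$ is equivalent to $\sP(\eta(t_j)\le nx_j,\ j=0,\dots,k)^n\to G(\mathbf x)=\exp(-\mu([\0,\mathbf x]^C))$ for all $\mathbf x$ with positive entries, hence to $n\,\sP(\eta/n\notin[\0,\mathbf x])\to\mu([\0,\mathbf x]^C)$, and thus --- by the standard correspondence between the MDA and multivariate regular variation (see, for instance, \cite{res2008}) --- to the vague convergence $a(n)\,\sP(\eta/a(n)\in\cdot)\to\mu(\cdot)$ on Borel subsets of $E=[0,\infty)^{k+1}\setminus\{\0\}$ that are bounded away from $\0$ and have $\mu$-null boundary, valid for \emph{every} sequence $a(n)\to\infty$. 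Since $\xi(t_0)$ has standard Fr\'echet margins, $\mu(\{x_0>x\})=1/x$; in particular $a(n)\,\sP(\eta(t_0)>a(n))\to 1$ and, more generally, $a(n)\,\sP(\eta(t_0)>M a(n))\to 1/M$ for every $M>0$.

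For the second step, fix $z\ge 1$ and $\mathbf y=(y_1,\dots,y_k)$ with positive entries, and set $A_z=\{\mathbf x\in E:\ x_0>z,\ x_j\le y_j x_0\ \text{for}\ j=1,\dots,k\}$. As $z\ge 1$ forces $\{\eta(t_0)>z a(n)\}\subseteq\{\eta(t_0)>a(n)\}$, the conditional probability in the theorem equals
\begin{align*}
  \sP\bigl(\tfrac{\eta(t_0)}{a(n)}>z,\ \tfrac{\eta(t_j)}{\eta(t_0)}\le y_j\ \forall j\ \big|\ \eta(t_0)>a(n)\bigr)
  = \frac{a(n)\,\sP(\eta/a(n)\in A_z)}{a(n)\,\sP(\eta(t_0)/a(n)>1)}.
\end{align*}
The set $A_z$ is bounded away from $\0$ but unbounded in the $x_0$-direction; it is however contained in $\{x_0>z\}$, which has finite $\mu$-mass, and $\partial A_z\subseteq\{x_0=z\}\cup\bigcup_j\{x_j=y_j x_0\}$ is $\mu$-null for all but countably many $(z,\mathbf y)$ --- note $\mu(\{x_0=z\})=0$ because $x\mapsto\mu(\{x_0>x\})=1/x$ is continuous. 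Applying the vague convergence to the relatively compact $\mu$-continuity set $A_z\cap\{x_0\le M\}$ and bounding the remainder via $\limsup_n a(n)\,\sP(\eta/a(n)\in A_z\cap\{x_0>M\})\le 1/M$ and $\mu(A_z\cap\{x_0>M\})\le 1/M$, letting $M\to\infty$ gives $a(n)\,\sP(\eta/a(n)\in A_z)\to\mu(A_z)$. Hence the conditional probability above converges to $\mu(A_z)$.

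The final step is to compute $\mu(A_z)$ from the incremental representation. By the construction \eqref{constr_max_stable}--\eqref{def_xi2} and uniqueness of the exponent measure, $\mu$ is the image of $u^{-2}\,\rd u\,\sP_V(\rd v)$ under $(u,v)\mapsto u\bigl(v(t_0),\dots,v(t_k)\bigr)$. On $A_z$ one has $x_0>z>0$, so only the event $\{V(t_0)>0\}$ contributes, and evaluating the $u$-integral over $(z/V(t_0),\infty)$ yields
\begin{align*}
  \mu(A_z) = \frac1z\,\sE\bigl[\1_{V(t_0)>0}\,\1_{V(t_j)\le y_j V(t_0),\ j=1,\dots,k}\,V(t_0)\bigr]
  = \frac{1-p}{z}\,\sE\bigl[\1_{\Delta\mathbf V^{(1)}\le\mathbf y}\,V^{(1)}(t_0)\bigr],
\end{align*}
using $V^{(1)}\eqdist V\mid\{V(t_0)>0\}$ and $\sP(V(t_0)>0)=1-p$. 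Conditioning on $\Delta\mathbf V^{(1)}$ and invoking \eqref{density_increment}, the last expression becomes $z^{-1}\sP(\Delta\mathbf{\tilde{V}}^{(1)}\le\mathbf y)$, where $\Delta\mathbf{\tilde{V}}^{(1)}$ is a genuine random vector since $(1-p)\sE V^{(1)}(t_0)=\sE V(t_0)=1$. For $z\ge 1$ we thus read off $\mu(A_z)=\sP(Z>z)\,\sP(\Delta\mathbf{\tilde{V}}^{(1)}\le\mathbf y)=\sP(Z>z,\ \Delta\mathbf{\tilde{V}}^{(1)}\le\mathbf y)$ with $Z$ an independent standard Pareto variable. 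As the conditioning forces $\eta(t_0)/a(n)>1$, convergence of these functions at all $z\ge 1$ (the case $z<1$ being trivially consistent) over the continuity points of the limiting law yields the claimed convergence in distribution on $\R^{k+1}$.

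The step that demands the most care is the first one: extracting vague convergence of the rescaled laws of $\eta$ on the cone $E$ --- for the specific, unbounded sets $A_z$ --- from the finite-dimensional hypothesis \eqref{MDA}, together with the continuity-set bookkeeping and the passage from integer $n$ to the arbitrary divergent sequence $a(n)$. Once $\mu$ is available as the explicit push-forward, the quotient structure and the Pareto factor $1/z$ fall out of a short Poisson/Fubini computation and the identification via \eqref{density_increment} is immediate.
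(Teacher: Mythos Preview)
Your proposal is correct and follows essentially the same route as the paper: reduce the MDA hypothesis to vague convergence $a(n)\,\sP(\eta/a(n)\in\cdot)\to\mu(\cdot)$ on sets bounded away from~$\0$, express the conditional probability as a ratio of $\mu$-masses of the scale-invariant sets $\{x_0>z,\ x_j\le y_jx_0\}$, and compute these masses from the Poisson intensity $u^{-2}\,\rd u\,\sP_V(\rd v)$ underlying \eqref{def_xi2}. The only cosmetic differences are that you insert an explicit truncation $\{x_0\le M\}$ to justify convergence on the unbounded set $A_z$ (the paper invokes \cite[Prop.~5.17]{res2008}, whose compactified-cone topology makes $A_z$ relatively compact automatically), and you handle the event $\{V(t_0)=0\}$ directly inside the integrand, whereas the paper introduces an auxiliary process $\xi^{(1)}$ with exponent measure $\mu^{(1)}$ satisfying $\mu(\cdot\cap A)=\mu^{(1)}(\cdot\cap A)$.
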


\begin{rem}
  Note that any process $\eta$ that satisfies the convergence in
  \eqref{MDA} for a process $\xi$ with standard Fr\'echet margins can
  be normalized such that the norming functions in \eqref{MDA} become
  $c_n(t) = 1/n$ and $b_n(t) = 0$, $n\in\Nb$, $t\in T$
  \cite[Prop.\ 5.10]{res2008}.
\end{rem}

\begin{proof}[Proof of Theorem \ref{theo_cond_increments_general}]
  For $\mathbf{X} = (\eta(t_0),\dots,\eta(t_k))$, which is in the MDA
  of the random vector $\Xi=(\xi(t_0),\dots,\xi(t_k))$, it follows
  from \cite[Prop.\ 5.17]{res2008} that
\begin{align}
  \label{conv_resnick}
  \lim_{m\to\infty} m \sP( \mathbf{X}/m \in B ) = \mu(B),
\end{align} 
for all elements $B$ of the Borel $\sigma$-algebra $\mathcal B(E)$ of
$E$ bounded away from $\{\0\}$ with $\mu(\partial B)=0$, where $\mu$
is defined by \eqref{def_mu}.  For $s_0> 0$ and $\sbf =(s_1, \ldots,
s_k)\in [0, \infty)^{k}$, we consider the sets
  $A_{s_0}=(s_0,\infty)\times [0, \infty)^k$, $A=A_1$ and
    $B_{\mathbf{s}} = \{\mathbf{x} \in [0, \infty)^{k+1} :
      (x^{(1)},\dots,x^{(k)}) \leq x^{(0)}\mathbf{s}\}$ for $\sbf$
      satisfying $\sP( \Delta\tilde \Vbf^{(1)}\in \partial
      [\0,\sbf])=0$. Then
\begin{align*}
  \left\{ \eta(t_0) > s_0 a(n),\, 
  \big( \eta(t_1) / \eta(t_0) ,\dots, \eta(t_k) / \eta(t_0) \big)
  \leq \mathbf{s} \right\}
%  &= \{ \mathbf{X} \in B_{\mathbf{s}} \cap A_{s_0 a(n)} \} \\
  = \{ \mathbf{X} / a(n) \in B_{\mathbf{s}}\cap A_{s_0} \},
\end{align*}
since $B_{\mathbf{s}}$ is invariant under
multiplication, i.e., $B_\sbf=cB_\sbf$ for any $c>0$.
Thus, we obtain
\begin{align}
  \notag
  \sP&\left( \eta(t_0) > s_0 a(n),
  \, \left( \eta(t_1) / \eta(t_0) ,\dots, 
  \eta(t_k) / \eta(t_0) \right) \leq \mathbf{s} \,\Big|\,
  \eta(t_0) > a(n) \right) \\
  %&= \sP\left( \mathbf{X} / a(n) \in
  %B_{\mathbf{s}}\cap A_{s_0} \,\Big|\,
  %\Xbf / a(n) \in A \right) \notag \\ 
  \notag&= \frac{
    {a(n)} \sP( \mathbf{X} / a(n) 
    \in B_{\mathbf{s}} \cap A \cap A_{s_0} )}{
    {a(n)} \sP( \mathbf{X} / a(n) \in A)} \\
  \label{eq:01}
  & \longrightarrow 
  \frac{\mu(B_{\mathbf{s}} \cap A \cap A_{s_0})}{\mu(A)},\quad (n\to\infty),
\end{align}
where the convergence follows from \eqref{conv_resnick}, as long as
$\mu\{ \partial (B_{\mathbf{s}} \cap A \cap A_{s_0})\} = 0$.\\ Let
  \begin{align}
    \label{def_xi3}
    \xi^{(1)}(t) = \max_{i\in\Nb} U_i^{(1)} V^{(1)}_i(t),
    \quad t\in T,
  \end{align}
  where $\sum_{i\in\Nb} \delta_{U_i^{(1)}}$ is a Poisson point process
  with intensity $(1-p)u^{-2}\rd u$ and let $\mu^{(1)}$ be the
  exponent measure of the associated max-stable random vector
  $(\xi^{(1)}(t_0), \ldots, \xi^{(1)}(t_k))$.  Then the choice $A =
  (1,\infty)\times [0,\infty)^k$ guarantees that $\mu(\cdot \cap A) =
    \mu^{(1)}(\cdot \cap A)$.  Comparing the construction of
    $\xi^{(1)}$ in \eqref{def_xi3} with the definition of the exponent
    measure, we see that $\mu^{(1)}$ is the intensity measure of the
    Poisson point process $\sum_{i\in\Nb} \delta_{(U_i^{(1)}
      V_i^{(1)}(t_0),\, \ldots,\, U_i^{(1)} V_i^{(1)}(t_k))}$ on $E$.
    Hence,
  \begin{align}
    \mu(A) &= \int_0^\infty (1-p)u^{-2} \sP(u V^{(1)}(t_0) > 1) \rd u \notag\\    
    &= (1-p)\int_0^\infty u^{-2} \int_{[u^{-1}, \infty)} 
      \sP(V^{(1)}(t_0) \in \rd y) \rd u \notag\\
      &= (1-p)\int_0^\infty y \sP(V^{(1)}(t_0) \in \rd y)
      = (1-p)\sE V^{(1)}(t_0) = 1,
  \end{align}
  where the last equality follows from $\sE V^{(1)}(t_0) = \sE
  V(t_0)/(1-p)$.  Furthermore, for $s_0\geq 1$ and $\sbf\in[0,\infty)^k$
  with $\sP(\Delta\mathbf{\tilde V}^{(1)} \in \partial [\0,\sbf])=0$,
  \begin{align}
    &\mu(B_{\mathbf{s}} \cap A \cap A_{s_0}) / ((1-p)\mu(A)) \notag\\
    &= \int_0^\infty  u^{-2} 
    \sP\Bigl(u V^{(1)}(t_0) > s_0,\, 
    \big(u V^{(1)}(t_1),\dots, u V^{(1)}(t_k)\big) \leq 
    \mathbf{s} u V^{(1)}(t_0) \Bigr) \rd u\notag\\
    &=  
    \int_0^\infty  \int_{[s_0 u^{-1},\, \infty)}  u^{-2} 
      \sP\Bigl(V^{(1)}(t_0)\in \rd y 
      \Big|\Delta\mathbf V^{(1)} \leq \mathbf{s} \Bigr)
       \sP(\Delta\mathbf V^{(1)} \leq \mathbf{s} )\rd u
      \notag\\    
    &=  
    \int_{[\0, \mathbf s]} \int_{[0,\infty)} y s_0^{-1} \cdot 
      \sP\Bigl(V^{(1)}(t_0)\in \rd y \Big| 
      \Delta\mathbf V^{(1)}=\mathbf z \Bigr)
      \sP(\Delta\mathbf V^{(1)}\in \rd\zbf)
      \notag\\
      &=  
      s_0^{-1}\int_{[\0, \mathbf s]} \sE\Bigl( V^{(1)}(t_0) \Big| 
    \Delta\mathbf V^{(1)}=\mathbf z \Bigr)
    \sP(\Delta\mathbf V^{(1)}\in \rd\zbf).
    \label{mu_expl}
  \end{align}
Equation \eqref{mu_expl} shows that the convergence in \eqref{eq:01}
holds for all continuity points $\sbf\in [0, \infty)^{k}$ of the
  distribution function of $\Delta\Vbf^{(1)}$.  Since $s_0\geq 1$ was
  arbitrary, this concludes the proof.
\end{proof}

\begin{rem}
\begin{enumerate}
\item If $V^{(1)}(t_0)$ is stochastically independent of the
  increments $\Delta\mathbf V^{(1)}$, we simply have
  $\sP(\Delta\mathbf{\tilde{V}}^{(1)}\in d \mathbf z) =
  \sP(\Delta\mathbf{{V}}^{(1)}\in d \mathbf z)$.
\item If $p=\sP(V(t_0) = 0)=0$, the exponent measure $\mu$ of any
  finite-dimensional vector $\Xi=(\xi(t_0), \ldots, \xi(t_k))$, $t_0,
  \ldots, t_k\in T$, $k\in\Nb$, satisfies the condition
  $\mu\left( \{0\}\times [0,\infty)^k \right)=0,$  
  and following
    Proposition \ref{calculateW}, the incremental representation of
    $\Xi$ according to \eqref{def_xi} is given by $\Xi = \max_{i\in\Nb}
    U_i \cdot (1, \Delta\mathbf{\tilde{V}}_i)^\top$, where $\Delta\mathbf{\tilde{V}}_i$, $i\in\Nb$, are independent copies of $\Delta\mathbf{\tilde{V}}=\Delta\mathbf{\tilde{V}}^{(1)}$.
\item If $\xi$ admits a representation \eqref{def_xi}, we have
  $\sP(\Delta\mathbf{\tilde{V}}^{(1)}\in d \mathbf z) =
  \sP(\Delta\mathbf{{V}}\in d \mathbf z)$, which shows that 
\eqref{cond_incr_conv} is indeed 
a special case of Theorem \ref{theo_cond_increments_general}.
\end{enumerate}
\end{rem}

\begin{rem} \label{rem_thres}
  In the above theorem, the sequence $a(n)$ of thresholds is only
  assumed to converge to $\infty$, as $n\to\infty$, ensuring that
  $\{\eta(t_0) > a(n)\}$ becomes a rare event. For statistical applications
  $a(n)$ should also be chosen such that the number of exceedances
  \begin{align*}%\label{num_exceed}
    N(n) = \sum_{i=1}^n \1\{ \eta_i(t_0) > a(n) \}
  \end{align*}
  converges to $\infty$ almost surely, where $(\eta_i)_{i\in\Nb}$ is a
  sequence of independent copies of $\eta$. By the Poisson limit
  theorem, this is equivalent to the additional assumption that
  $\lim_{n\to\infty} a(n)/n = 0$, since in that case $n\sP(\eta(t_0) >
  a(n)) = n / a(n) \to \infty$, as $n\to\infty$.
\end{rem}
  
\begin{rem}
\cite{eng2012a} consider H\"usler-Reiss distributions
\cite{hue1989,kab2011} and obtain their limiting results by
conditioning on certain extremal events $A\subset E$.  They show that
various choices of $A$ are sensible in the H\"usler-Reiss case,
leading to different limiting distributions of the increments of
$\eta$. In case $\xi$ is a Brown-Resnick process and $A =
(1,\infty)\times [0, \infty)^{k}$ the assertions of Theorem
  \ref{theo_cond_increments_general} and \cite[Thm.\ 3.3]{eng2012a}
  coincide.
\end{rem}

\begin{ex}[Extremal Gaussian process \cite{sch2002}]
A commonly used class of stationary yet non-ergodic max-stable
processes on $\R^d$ is defined by
\begin{align}
  \label{schlather_model}
  \xi(t) = \max_{i\in\Nb} U_i Y_i(t), \quad t\in\R^d,
\end{align} 
where $\sum_{i\in\Nb} \delta_{U_i}$ is a Fr\'echet point process,
$Y_i(t)=\max(0, \tilde Y_i(t))$, $i \in \Nb$, and the $\tilde Y_i$ are
i.i.d.\ stationary, centered Gaussian processes with $\sE(\max(0,
\tilde Y_i(t))) =1$ for all $t\in\R^d$ \cite{sch2002,bla2011}.  Note
that in general, a $t_0\in\R^d$ s.t.\ $Y_i(t_0)=1$ a.s. does not
exist, i.e., the process admits representation \eqref{def_xi2} but not
representation \eqref{def_xi}.  In particular, for the extremal
Gaussian process we have $p=\sP(V(t_0)=0)=1/2$ and the distribution of
the increments in \eqref{density_increment} becomes
\begin{align*}
 \sP(\Delta\mathbf{\tilde{V}}^{(1)} \! \in \rd \mathbf z)
&= \frac12 \sE\Bigl[ Y(t_0) \, \Big|\,  
   (Y(t_1)/Y(t_0), \ldots, Y(t_k)/Y(t_0)) = \mathbf z, 
  \, Y(t_0)>0\Bigr]\\
  & \qquad \cdot\sP\Bigl( \bigl(Y(t_1)/Y(t_0), \ldots, 
  Y(t_k)/Y(t_0)\bigr) 
  \in \rd\zbf 
  \, \Big|\, Y(t_0)>0 \Bigr).
\end{align*}
\end{ex}

While the H\"usler-Reiss distribution is already given by the
incremental representation \eqref{def_xi}, cf.~\cite{kab2011}, other
distributions can be suitably rewritten, provided that the cumulative
distribution function and hence the respective exponent measure $\mu$
is known.

\begin{prop}\label{calculateW} 
  Let $\Xi = (\xi(t_0),\dots,\xi(t_k))$ be a max-stable
  process on $T = \{t_0, \ldots, t_k \}$ with standard Fr\'echet margins
  and suppose that its exponent measure $\mu$
  is concentrated on $(0, \infty) \times [0, \infty)^{k}$.  
  Define a random vector 
  $\Wbf=(W^{(1)}, \ldots, W^{(k)})$ 
  via its cumulative distribution function
  \begin{align}
    \label{def_W}
    \sP( \Wbf \leq \mathbf{s}) = \mu(B_{\mathbf{s}} \cap A), 
    \quad \mathbf{s}\in [0,\infty)^{k},
  \end{align}
  where $A = (1,\infty)\times [0, \infty)^{k}$ and $B_{\mathbf s} =
    \{\xbf \in [0,\infty)^{k+1}: \, (x^{(1)},\ldots,x^{(k)}) \leq
      x^{(0)} \sbf \}$.  Then, $\Xi$ allows for an incremental
      representation \eqref{def_xi} with $\Wbf_i$, $i\in\Nb$, being
      independent copies of $\Wbf$.
\end{prop}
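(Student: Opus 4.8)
The plan is to take the prescription \eqref{def_W} at face value, build from it the max-stable vector produced by the incremental construction \eqref{def_xi}, and show that it has the same exponent measure as $\Xi$; since a max-stable law is determined by its exponent measure through \eqref{def_mu}, this forces equality in distribution. Concretely I would set $W(t_0)\equiv1$, let $(W(t_1),\dots,W(t_k)):=\Wbf$, take a Fr\'echet point process $\sum_{i\in\Nb}\delta_{U_i}$ independent of i.i.d.\ copies $\Wbf_i$ of $\Wbf$, and put $\tilde\Xi=\max_{i\in\Nb}U_i\,(1,W_i^{(1)},\dots,W_i^{(k)})$. Before this, one should record that \eqref{def_W} really is the distribution function of a random vector on $[0,\infty)^k$: since $\xi(t_0)$ has standard Fr\'echet margins, $\mu(A)=\mu\bigl((1,\infty)\times[0,\infty)^k\bigr)=1$, so $\mu(\cdot\cap A)$ is a probability measure on $E$ concentrated on $A$, and $\Wbf$ may be realised as its image under the continuous map $\psi\colon(x^{(0)},\dots,x^{(k)})\mapsto(x^{(1)}/x^{(0)},\dots,x^{(k)}/x^{(0)})$, because $\psi^{-1}([\0,\sbf])\cap A=B_\sbf\cap A$.

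Next I would compute the exponent measure $\tilde\mu$ of $\tilde\Xi$. Exactly as in the proof of Theorem~\ref{theo_cond_increments_general}, $\tilde\mu$ is the intensity measure of the Poisson point process $\sum_{i\in\Nb}\delta_{(U_i,\,U_iW_i^{(1)},\dots,U_iW_i^{(k)})}$ on $E$. Writing $A_{s_0}=(s_0,\infty)\times[0,\infty)^k$ and using the scale invariance $B_\sbf=cB_\sbf$, a one-line integral gives, for every $s_0>0$ and $\sbf\in[0,\infty)^k$,
\begin{align*}
  \tilde\mu\bigl(B_\sbf\cap A_{s_0}\bigr)
  =\int_{s_0}^\infty u^{-2}\,\sP(\Wbf\le\sbf)\,\rd u
  =s_0^{-1}\,\sP(\Wbf\le\sbf).
\end{align*}
On the other hand $\mu$ is $(-1)$-homogeneous, $\mu(c\,\cdot)=c^{-1}\mu(\cdot)$, and $B_\sbf\cap A_{s_0}=s_0\,(B_\sbf\cap A)$; hence $\mu(B_\sbf\cap A_{s_0})=s_0^{-1}\mu(B_\sbf\cap A)=s_0^{-1}\sP(\Wbf\le\sbf)$ by \eqref{def_W}. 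Thus $\mu$ and $\tilde\mu$ agree on the family $\mathcal C=\{B_\sbf\cap A_{s_0}:s_0>0,\ \sbf\in[0,\infty)^k\}$.

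It then remains to upgrade this to $\mu=\tilde\mu$. I would observe that $\mathcal C$ is a $\pi$-system, since $(B_\sbf\cap A_{s_0})\cap(B_{\sbf'}\cap A_{s_0'})=B_{\sbf\wedge\sbf'}\cap A_{s_0\vee s_0'}\in\mathcal C$, and that it generates the Borel $\sigma$-algebra of $D:=(0,\infty)\times[0,\infty)^k$: the map $(x^{(0)},\dots,x^{(k)})\mapsto(x^{(0)},x^{(1)}/x^{(0)},\dots,x^{(k)}/x^{(0)})$ is a homeomorphism of $D$ carrying $\mathcal C$ onto the rectangles $(s_0,\infty)\times[\0,\sbf]$, which generate $\mathcal B(D)$. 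Moreover $C_n:=B_{(n,\dots,n)}\cap A_{1/n}$ is an increasing sequence in $\mathcal C$ with $C_n\uparrow D$ and $\mu(C_n)=\tilde\mu(C_n)=n\,\sP(\Wbf\le(n,\dots,n))\le n<\infty$. Since $\mu$ is concentrated on $D$ by hypothesis, and $\tilde\mu$ is too because every atom of the above Poisson point process has positive first coordinate, the uniqueness theorem for $\sigma$-finite measures (Dynkin's $\pi$--$\lambda$ theorem) yields $\mu=\tilde\mu$ on $\mathcal B(E)$, whence $\sP(\Xi\le\xbf)=\exp(-\mu([\0,\xbf]^C))=\exp(-\tilde\mu([\0,\xbf]^C))=\sP(\tilde\Xi\le\xbf)$, i.e.\ $\Xi\eqdist\tilde\Xi$. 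Finally I would verify that $\tilde\Xi$ is indeed an instance of \eqref{def_xi}: the first component of $W$ is $\equiv1$ by construction, $\Wbf$ is non-negative, and $\sE W^{(j)}=\int_0^\infty u^{-2}\sP(uW^{(j)}>1)\,\rd u=\tilde\mu(\{x^{(j)}>1\})=\mu(\{x^{(j)}>1\})=1$, the last equality because $\xi(t_j)$ has standard Fr\'echet margins.

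The step I expect to require the most care is the passage from agreement on $\mathcal C$ to $\mu=\tilde\mu$: one has to be sure that $\mathcal C$ is a genuine generating $\pi$-system and that both measures are $\sigma$-finite on it, and to notice that the hypothesis ``$\mu$ concentrated on $(0,\infty)\times[0,\infty)^k$'' is exactly what makes this argument work, since every set in $\mathcal C$ lies in $\{x^{(0)}>0\}$ and is therefore blind to any $\mu$-mass on $\{x^{(0)}=0\}$ --- mass which in any case cannot occur when $\Xi$ admits a representation \eqref{def_xi}, where $W(t_0)=1$ almost surely.
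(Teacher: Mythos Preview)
Your proof is correct and rests on the same two ingredients as the paper's --- the change of variables $T(x^{(0)},\dots,x^{(k)})=(x^{(0)},x^{(1)}/x^{(0)},\dots,x^{(k)}/x^{(0)})$ and the $(-1)$-homogeneity of $\mu$ --- but the organization differs. The paper argues in the forward direction: it realizes $\Xi$ as $\max_i\Xbf_i$ for a Poisson point process $\Pi=\sum_i\delta_{\Xbf_i}$ with intensity $\mu$, pushes $\Pi$ through $T$, computes the new intensity on the product rectangles $(c,\infty)\times[\0,\sbf]$ as $c^{-1}\mu(B_\sbf\cap A)$, and recognizes this as the intensity of $\sum_i\delta_{(U_i,\Wbf_i)}$; applying $T^{-1}$ then yields $\Xi\eqdist\max_iU_i(1,\Wbf_i)$ in one stroke. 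You argue in the reverse direction: build $\tilde\Xi$ from the incremental construction, compute its exponent measure $\tilde\mu$, and match it to $\mu$ on the $\pi$-system $\mathcal C$. This forces you to make the uniqueness step explicit via a Dynkin argument, whereas the paper effectively hides the same step by doing the identification in product coordinates, where agreement on rectangles is the standard criterion for equality of intensity measures. Both routes are sound; the paper's is a touch shorter, yours is more self-contained, and your closing verification that $\sE W^{(j)}=1$ is a detail the paper leaves implicit.
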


\begin{proof}
 First, we note that \eqref{def_W} indeed defines a valid cumulative
 distribution function. To this end, consider the measurable
 transformation
 \begin{align*}
   T: (0, \infty)\times [0, \infty)^{k} 
     \to (0, \infty)\times [0, \infty)^{k}, \ (x_0,\dots, x_k)
   \mapsto \left(x_0, \frac{x_1}{x_0}, \dots, \frac{x_k}{x_0}\right).
 \end{align*}
Then, $ T(B_{\mathbf{s}} \cap A) = (1, \infty) \times [\0, \sbf]$ and
the measure $\mu^T(\cdot) = \mu(T^{-1}((1,\infty)\times \,\cdot\,))$
is a probability measure on $[0, \infty)^{k}$.
 Since
 \begin{align*}
   \mu(B_{\mathbf{s}} \cap A) 
   = \mu(T^{-1}((1,\infty)\times[\0, \sbf]))
   = \mu^T([\0, \sbf]),
 \end{align*}
the random vector $\Wbf$ is well-defined and has law $\mu^T$.

 By definition of the exponent measure, we have $\Xi \eqdist \max_{i
   \in \Nb} \Xbf_i$, where $\Pi = \sum_{i\in\Nb} \delta_{\Xbf_i}$ is a
 PPP on $E$ with intensity measure $\mu$. Then, the transformed point
 process $T\Pi = \sum_{i\in\Nb} \delta_{(X_i^{(0)},\,
   X_i^{(1)}/X_i^{(0)},\, \ldots,\, X_i^{(k)} /X_i^{(0)})}$ has
 intensity measure
 \begin{align*}
  \tilde \mu((c,\infty) \times [\0,\sbf]) 
  ={}& \mu\left(T^{-1}\left( (c,\infty) \times [\0,\sbf] \right) 
  \right)\\
  ={} & \mu(B_{\mathbf{s}} \cap ((c,\infty) \times [0,\infty)^k)) {}
  ={} c^{-1} \mu(B_{\mathbf{s}} \cap A)
 \end{align*}
 for any $c > 0$, $\mathbf{s} \in [0,\infty)^k$, where we use the fact
   that $\mu$, as an exponent measure, has the homogeneity property
   $c^{-1}\mu(\rd\xbf)=\mu(\rd(c\xbf))$.  Thus, $T\Pi$ has the same
   intensity as $\sum_{i\in\Nb} \delta_{(U_i, \Wbf_i)}$, where
   $\sum_{i\in\Nb} \delta_{U_i}$ is a Fr\'echet point process and
   $\Wbf_i$, $i \in \Nb$, are i.i.d.\ vectors with law $\sP(\Wbf \leq
   \mathbf{s}) = \mu(B_{\mathbf{s}} \cap A)$. Hence, we have
 \begin{align*}
  \Xi\eqdist{}& \max_{i \in \Nb} T^{-1}\left(\big(X_i^{(0)}, X_i^{(1)} / X_i^{(0)}, 
  \ldots, X_i^{(k)} / X_i^{(0)}\big)\right)\\
  \eqdist{}& \max_{i \in \Nb} T^{-1}\left(\big(U_i,\Wbf_i\big)\right) {}
  ={} \max_{i \in \Nb} U_i \Wbf_i,
 \end{align*}
 which completes the proof.
\end{proof}

\begin{ex}[Symmetric logistic distribution, cf.\ \cite{gum1960}] 
\label{ex:symm_log}
  For $T=\{t_0,\dots,t_k\}$, the symmetric logistic distribution is
  given by
  \begin{align}
    \sP(\xi(t_0) \leq x_0,\dots, \xi(t_k) \leq x_k) = 
    \exp\left[ - \left( x_0^{-q}+ \dots + x_k^{-q}\right)^{1/q} \right], 
    \label{eq:cdf_symm_log}
  \end{align}
  for $x_0,\dots,x_k>0$ and $q > 1$. Hence, the density of the
  exponent measure is
  \begin{align*}
    \mu(\rd x_0,\dots,\rd x_k) = 
    \left(\sum_{i=0}^k x_i^{-q}\right)^{1/q -(k+1)}
    \left(\prod_{i=1}^k(iq-1)\right)
    \prod_{i=0}^k x_i^{-q-1}  \rd x_0\dots \rd x_k.
  \end{align*}
  Applying Proposition \ref{calculateW}, the incremental process 
  $W$ in the representation \eqref{def_xi} is given by
  \begin{align*}
    \sP(W(t_1) \leq s_1, \dots W(t_k) \leq s_k) 
    = \left(1 + \sum_{i=1}^k s_i^{-q}\right)^{1/q - 1}.
  \end{align*}
\end{ex}

\subsection{Continuous sample paths}
In this subsection, we provide an analog result to Theorem
\ref{theo_cond_increments_general}, in which convergence in the sense
of finite-dimensional distributions is replaced by weak convergence on
function spaces. In the following, for a Borel set $U\subset\R^d$, we
denote by $C(U)$ and $C^+(U)$ the space of non-negative and strictly
positive continuous functions on $U$, respectively, equipped with the
topology of uniform convergence on compact sets.
  
\begin{thm} \label{theo_cond_increments_cont}
  Let $K$ be a compact subset of $\R^d$ and $\{\eta(t): \ t\in K\}$ be
  a process with positive and continuous sample paths in the MDA of a
  max-stable process $\{\xi(t): \ t\in K\}$ as in \eqref{def_xi} in
  the sense of weak convergence on $C(K)$. In particular, suppose that
  $$ \frac 1n \max_{i=1}^n \eta_i(\cdot) \cvgdist \xi(\cdot), \quad
  n\to\infty.$$ Let $W$ be the incremental process from \eqref{def_xi}
  and $Z$ a Pareto random variable, independent of $W$. Then, for any
  sequence $a(n)$ of real numbers with $a(n) \to \infty$, we have the
  weak convergence on $(0,\infty)\times C(K)$
  \begin{align*}
    \left(\frac{\eta(t_0)}{a(n)}, 
    \frac{\eta(\cdot)}{\eta(t_0)} \ \Big|\ \eta(t_0) > a(n) \right)
    \cvgdist (Z,  W(\cdot)),
  \end{align*}
  as $n$ tends to $\infty$.
\end{thm}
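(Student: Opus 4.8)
The plan is to upgrade the finite-dimensional convergence of Theorem~\ref{theo_cond_increments_general} to weak convergence on the function space by combining convergence of finite-dimensional distributions with tightness. First I would fix notation: write $\Psi_n = \bigl(\eta(t_0)/a(n),\,\eta(\cdot)/\eta(t_0)\bigr)$ conditioned on $\{\eta(t_0)>a(n)\}$, a random element of $(0,\infty)\times C(K)$. Since $\xi$ admits representation~\eqref{def_xi}, we have $W(t_0)=1$ a.s., so $p=\sP(V(t_0)=0)=0$ and $\Delta\tilde{\Vbf}^{(1)}\eqdist W$ (this is point~3 of the remark following Theorem~\ref{theo_cond_increments_general}); hence Theorem~\ref{theo_cond_increments_general} already gives, for every finite collection $t_1,\dots,t_k\in K$, the convergence of $\bigl(\eta(t_0)/a(n),\,\eta(t_1)/\eta(t_0),\dots,\eta(t_k)/\eta(t_0)\bigr)$ conditioned on $\{\eta(t_0)>a(n)\}$ to $\bigl(Z,W(t_1),\dots,W(t_k)\bigr)$. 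In particular the first coordinate $\eta(t_0)/a(n)$ separates off and converges to an independent Pareto $Z$, so by standard arguments it suffices to prove tightness of the conditional laws of the second component $\eta(\cdot)/\eta(t_0)$ in $C(K)$, together with the already-established fidi limits, to conclude weak convergence of the pair to $(Z,W(\cdot))$.

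The core step is therefore tightness. The natural route is to derive it from the tightness that is built into the hypothesis $\tfrac1n\max_{i=1}^n\eta_i(\cdot)\cvgdist\xi(\cdot)$ on $C(K)$. I would proceed as follows. Write $M_n(\cdot)=\tfrac1n\max_{i=1}^n\eta_i(\cdot)$ and let $i^\ast$ index a copy $\eta_{i^\ast}$ achieving the maximum at $t_0$; a point-process / single-big-jump argument shows that conditionally on $\{\tfrac1n\eta_{i^\ast}(t_0)$ being large$\}$ the ratio process $\eta_{i^\ast}(\cdot)/\eta_{i^\ast}(t_0)$ has, asymptotically, the same law as $\eta(\cdot)/\eta(t_0)$ conditioned on $\{\eta(t_0)>a(n)\}$, because the Poisson structure of the limit means that a large value of the pointwise maximum at $t_0$ is, with probability tending to one, produced by a single profile $U_{i^\ast}W_{i^\ast}$. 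Consequently, any failure of tightness for the conditional ratio processes would translate into oscillation of the profiles contributing to $M_n$ near its value at $t_0$, contradicting the assumed tightness of $\{M_n\}$ on $C(K)$. Concretely, one establishes the modulus-of-continuity estimate: for every $\varepsilon,\delta>0$ there is $\rho>0$ with
\[
\limsup_{n\to\infty}\sP\Bigl(\sup_{|s-t|\le\rho}\bigl|\tfrac{\eta(s)}{\eta(t_0)}-\tfrac{\eta(t)}{\eta(t_0)}\bigr|>\varepsilon \,\Big|\, \eta(t_0)>a(n)\Bigr)<\delta,
\]
deducing it from the corresponding oscillation control for $\xi$ (which holds since $\xi$ has continuous paths and, being the distributional limit on $C(K)$, the family $\{M_n\}$ is tight, hence satisfies the Arzel\`a--Ascoli oscillation bound uniformly in $n$).

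The main obstacle I anticipate is making rigorous the passage from ``tightness of the maximum process $M_n$ on $C(K)$'' to ``tightness of the single extreme profile's ratio process,'' i.e., isolating the single event that carries the exceedance. This requires a careful truncation: split $M_n$ into the contribution of copies with $\eta_i(t_0)>a(n)$ and the rest, show the number of such copies is asymptotically Poisson (as in Remark~\ref{rem_thres}), and argue that with high probability exactly one such copy also governs the near-$t_0$ behaviour of $M_n$; the low values contribute negligibly to oscillations on the relevant scale. A clean way to organise this is to work directly with the Poisson point process $\sum_i\delta_{(U_i,W_i)}$ underlying $\xi$: the exceedance event $\{\eta(t_0)>a(n)\}$ corresponds, in the limit, to picking a point with $U_iW_i(t_0)>1$, and the measure computation $\mu(A)=1$ from the proof of Theorem~\ref{theo_cond_increments_general} (with $p=0$) shows this selects the profile $W$ size-biased by $W(t_0)$, i.e., exactly the law of $W$ since $W(t_0)=1$. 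Once the single-big-jump reduction is in place, the oscillation estimate for $\eta(\cdot)/\eta(t_0)$ follows from that for $\xi(\cdot)/\xi(t_0)$, which in turn follows from continuity of the paths of $W$ and the assumed weak convergence on $C(K)$; combining tightness with the fidi limit from Theorem~\ref{theo_cond_increments_general} then yields the claim by Prohorov's theorem.
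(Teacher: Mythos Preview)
Your strategy (finite-dimensional convergence from Theorem~\ref{theo_cond_increments_general} plus a separate tightness argument) is sound in principle, but the paper takes a much more direct route that bypasses the tightness step entirely. The key observation you are missing is that the MDA assumption in the sense of weak convergence on $C(K)$ is \emph{equivalent} to regular variation of $\eta$ on the function space itself: by \cite[Cor.~9.3.2]{deh2006a} one has
\[
  \lim_{u\to\infty} u\,\sP(\eta/u \in B) = \mu(B)
\]
for every Borel set $B\subset C(K)$ bounded away from $0^K$ with $\mu(\partial B)=0$, where $\mu$ is the exponent measure of $\xi$ on $C(K)$, i.e.\ the intensity of $\sum_i \delta_{U_iW_i(\cdot)}$. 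With this in hand the paper simply reruns the computation from the proof of Theorem~\ref{theo_cond_increments_general}, but now on $C(K)$: for $s_0\ge 1$ and Borel $D\subset C(K)$ with $\sP(W\in\partial D)=0$, set $A_{s_0}=\{f\in C(K):f(t_0)>s_0\}$ and $B_D=\{f\in C(K):f(\cdot)/f(t_0)\in D\}$, note $B_D$ is scale-invariant, and compute $\mu(A_{s_0}\cap B_D)/\mu(A_1)=s_0^{-1}\sP(W\in D)$ directly from $W(t_0)=1$. This already yields the full weak limit on $(0,\infty)\times C(K)$ via the portmanteau theorem; no separate tightness argument is needed.

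Your proposed tightness argument, by contrast, has a genuine gap. The implication ``oscillation of the conditional ratio process forces oscillation of $M_n$'' does not follow: $M_n$ is a pointwise maximum over $n$ independent copies, and the maximum operation can mask oscillations of an individual contributor (a profile $\eta_{i^\ast}$ that oscillates wildly may simply be dominated by other, smoother $\eta_j$ at the points where it dips). The single-big-jump heuristic is suggestive, but to make it rigorous you would need to control, uniformly in $n$, the probability that the profile realizing $\max_i\eta_i(t_0)$ also realizes the maximum on a whole neighbourhood---and this is exactly the kind of statement that is most cleanly extracted from the functional regular variation $u\sP(\eta/u\in\cdot)\to\mu(\cdot)$. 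In other words, the natural way to repair your argument leads straight back to the tool the paper uses to avoid the detour in the first place.
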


\begin{rem}\label{weak_conv_Rd}
Analogously to \cite[Thm.\ 5]{whi1970}, weak convergence of a sequence
of probability measures $P_n$, $n\in\Nb$, to some probability measure
$P$ on $C(\R^d)$ is equivalent to weak convergence of $P_n r_j^{-1}$
to $P r_j^{-1}$ on $C([-j, j]^d)$ for all $j\geq 1$, where $r_j :
C(\R^d) \to C([-j,j]^d)$ denotes the restriction of a function to the
cube $[-j, j]^d$.  Hence the assertion of Theorem
\ref{theo_cond_increments_cont} remains valid if the compact set $K$
is replaced by $\R^d$.
\end{rem}

\begin{proof}[Proof of Theorem \ref{theo_cond_increments_cont}]
 As the process $\xi$ is max-stable and $\eta\in\text{MDA}(\xi)$,
 similarly to the case of multivariate max-stable distributions
 (cf.\ Theorem \ref{theo_cond_increments_general}), we have that
 \begin{align} \label{conv_dehaan}
  \lim_{u \to \infty} u\sP(\eta / u \in B) = \mu(B)
 \end{align}
 for any Borel set $B \subset C(K)$ bounded away from $0^K$, i.e.,
 $\inf\{\sup_{s\in K} f(s) : \ f\in B\} > 0$, and with $\mu(\partial
 B) = 0$ \cite[Cor.\ 9.3.2]{deh2006a}, where $\mu$ is the
 \emph{exponent measure} of $\xi$, defined by
 \begin{align}
  & \sP(\xi(s) \leq x_j, \ s \in K_j, \ j=1,\ldots,m) \nonumber \\
  &={} \exp\left[-\mu\left(\left\{ f \in C(K): \ \textstyle\sup_{s \in K_j} f(s) > x_j \textrm{ for some } j \in \{1,\ldots,m\} 
  \right\}\right)\right]
 \end{align}
 for $x_j \geq 0$, $K_j \subset K$ compact.  Thus, $\mu$ equals the
 intensity measure of the Poisson point process $\sum_{i \in \Nb}
 \delta_{U_i W_i(\cdot)}$.  
For $z>0$ and $D\subset C(K)$ Borel, we consider the sets 
\begin{align*}
A_{z} &= \{f \in C(K): \ f(t_0) > z\}\\
B_D &= \{f \in C(K) : f(\cdot)/f(t_0)\in D\}
\end{align*}
and $A=A_1$. Note that $B_D$ is invariant w.r.t.\ multiplication by
 any positive constant. Then, as $W(t_0) = 1$ a.s., we have
 $\mu(A_{z}) = \int_{z}^\infty u^{-2} \sd u = z^{-1}$ and for
 $s_0\geq 1$ and any Borel set $D \subset C(K)$ with
 $\sP(W \in \partial D) = 0$, by \eqref{conv_dehaan}, we get
 \begin{align*}
  &\sP\left\{\eta(t_0) / a(n) > s_0,\ \eta(\cdot)/\eta(t_0) \in D \, \Big|\, \eta(t_0) > a(n) \right\}\\
  &= \frac{a(n) \sP\bigl\{\eta(\cdot) / a(n) \in A_{s_0} \cap B_D \cap A\bigr\}}{a(n) \sP\bigl\{\eta(\cdot) / a(n) \in A\bigr\}}\\
  &\stackrel{n \to \infty}{\longrightarrow}{} \frac{\mu(B_D \cap A_{s_0})}{\mu(A)}\\
 &={} \int_{s_0}^\infty u^{-2}\sP\bigl\{u W(\cdot) \in B_D\bigr\} \sd u\\
   &={} s_0^{-1} \sP\bigl\{W(\cdot) \in D\bigr\}, 
 \end{align*}
 which is the joint distribution of $Z$ and $W(\cdot)$.
\end{proof}

\begin{ex}[Brown-Resnick processes, cf.\ \cite{bro1977,kab2009}]
  \label{BRproc}
  For $T=\R^d$, $d\geq 1$, let $\{Y(t): \ t\in T\}$ be a centered
  Gaussian process with stationary increments, continuous sample paths
  and $Y(t_0) = 0$ for some $t_0\in\R^d$. Note that by
  \cite[Thm.\ 1.4.1]{adl2007} it is sufficient for the continuity of
  $Y$ that there exist constants $C,\alpha,\delta > 0$, such that
  \begin{align*}
    \sE |Y(s) - Y(t)|^2 \leq \frac{C}{|\log \|s-t\| |^{1+\alpha}}
  \end{align*}
  for all $s,t\in\R^d$ with $\|s-t\|<\delta$. Further let $\gamma(t) =
  \sE(Y(t) - Y(0))^2$ and $\sigma^2(t) = \sE(Y(t))^2$, $t \in \R^d$,
  denote the variogram and the variance of $Y$, respectively. Then,
  with a Fr\'echet point process $\sum_{i\in\Nb} \delta_{U_i}$ and
  independent copies $Y_i$ of $Y$, $i\in\Nb$, the process
  \begin{align}
    \label{BR_proc}
    \xi(t) = \max_{i\in\Nb} U_i \exp\left(Y_i(t) - \sigma^2(t) / 2\right), 
    \quad t\in\R^d,
  \end{align}  
  is stationary and its distribution only depends on the variogram
  $\gamma$.  Comparing \eqref{BR_proc} with the incremental
  representation \eqref{def_xi}, the distribution of the increments is
  given by the log-Gaussian random field $W(t) = \exp\left(Y(t) -
  \sigma^2(t) / 2\right)$, $t\in\R^d$, and Theorem
  \ref{theo_cond_increments_cont} applies.
\end{ex}

\section{Mixed moving maxima representation}\label{M3representation}

A large and commonly used class of max-stable processes is the
class of M3 processes \eqref{def_M3}. Let 
\begin{align}
   \label{pi0}
   \Pi_0 = \sum_{i\in\Nb} \delta_{(U_i.T_i,F_i)}
 \end{align}
 be the corresponding PPP on $(0,\infty)\times\R^d\times C(\R^d)$ with
 intensity $u^{-2}\rd u \,\rd t \,\sP_F(\rd f)$.  In the
 sequel, M3 processes are denoted by 
\begin{align*}
  M(t) = \max_{i\in\Nb} U_i F_i(t- T_i), \quad t\in\R^d.
\end{align*} 
The marginal distributions
 of $M$ are given by
\begin{align}
& \sP(M(t_0)\leq s_0, \ldots, M(t_k)\leq s_k) \notag\\
&= \sP\left[ \Pi_0 
  \left(\left\{(u,t,f): 
  \max_{l=0}^k u f(t_l-t)/s_l > 1\right\}\right) = 0\right] \notag\\
&= \exp\left(- \int_{C(\R^d)} \int_{\R^d} 
\max_{l=0}^k (f(t_l-t)/s_l)\, \rd t \, \sP_F(\rd f) \right),\label{M3_marginal}
\end{align}
$t_0, \ldots, t_k\in\R^d$, $s_0, \ldots, s_k\geq 0$, $k\in\Nb$.

\bigskip

In Section \ref{examples_increment_representation}, we were interested
in recovering the incremental process $W$ from processes in the MDA of
a max-stable process with incremental representation. In case of M3
processes, the object of interest is clearly the distribution of the
shape function $F$. Thus, in what follows, we provide the
corresponding convergence results for processes $\eta$ in the MDA of
an M3 process.  We distinguish between processes on $\R^d$ with
continuous sample paths and processes on a grid ($\Z^d$).  The main
idea is to consider $\eta$ in the neighborhood of its own (local)
maximum, conditional on this maximum being large.
\medskip

\subsection{Continuous Case}

Let $\{\eta(t): \, t \in \R^d\}$ be strictly positive and in the MDA
of a mixed moving maxima process $M$ in the sense of weak convergence
in $C(\R^d)$. We assume that $\eta$ is normalized such that the
norming functions in \eqref{MDA} are given by $c_n(t) = 1 / n$ and
$b_n(t) = 0$, for any $n\in\Nb$ and $t\in\R^d$. Further suppose that
the shape function $F$ of $M$ is sample-continuous and satisfies
\begin{align}
  \begin{split}
    F(\vec 0) &= \lambda \quad a.s., \\
    F(t) & \in [0,\lambda) \ \forall t \in \R^d \setminus \{\vec 0\} \quad a.s.
      \label{eq:Fmaxatorigin}
  \end{split}
\end{align}
for some $\lambda > 0$ and 
\begin{equation}
  \int_{\R^d} \sE\left\{  \max_{t_0 \in K} F(t_0 - t) \right\} \sd t < \infty 
  \label{eq:sup-integrability-cont}
\end{equation}
for any compact set $K \subset \R^d$.  Under these assumptions, there
is an analog result to Theorem \ref{theo_cond_increments_cont}.

\begin{thm} \label{thm_conv_mmm}
 Let $\Q, K \subset \R^d$ be compact such that $\partial \Q$ is a
 Lebesgue null set and let
 $$\tau_\Q: \ C(\Q) \to \R^d, \ f \mapsto \inf\left( \argmax_{t \in \Q}
 f(t) \right),$$
 where {\rm ``inf''} is understood in the lexicographic sense.
 Then, under the above assumptions, for any Borel set
 $B \subset C(K)$ with $\sP(F / \lambda \in \partial B) = 0$, and any
 sequence $a(n)$ with $a(n) \to \infty$ as $n \to \infty$, we have
 \begin{align*}
  & \lim_{\substack{\{\vec 0\} \in L \nearrow \R^d\\ {\rm compact}}}
   \limsup_{n \to \infty} \sP\Big\{ \eta\big(\tau_\Q(\eta|_\Q)+\cdot\big) \big/  \eta(\tau_\Q(\eta|_\Q)) \in B \ \Big| \\[-1em]
   &  \hspace{2.5cm} \max_{t\in \Q}\eta(t) = \max_{t \in \Q \oplus L} \eta(t), 
   \ \max_{t\in \Q}\eta(t) \geq a(n)\Big\} \hfill {}={} \hfill \sP\big\{F(\cdot) / \lambda \in B\big\},
 \end{align*}
 where $\oplus$ denotes morphological dilation.
 
 The same result holds true if we replace $\limsup_{n \to \infty}$ by
 $\liminf_{n \to \infty}$.
\end{thm}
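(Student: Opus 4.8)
The plan is to follow the strategy of the proof of Theorem~\ref{theo_cond_increments_cont}: rewrite the conditional probability as a ratio of probabilities of rare events, let $n\to\infty$ using $\eta\in\mathrm{MDA}(M)$ to pass to a ratio of exponent-measure quantities (still depending on the dilation set $L$), and only afterwards let $\{\vec 0\}\in L\nearrow\R^d$. Fix a compact $L\ni\vec 0$, let $K'$ be a compact set containing both $\Q\oplus L$ and $\Q\oplus K$, and put $D(L)=\{g\in C(K'):\ \max_{s\in\Q}g(s)\ge 1,\ \max_{s\in\Q}g(s)=\max_{s\in\Q\oplus L}g(s)\}$ and $N(L)=\{g\in D(L):\ g(\tau_\Q(g|_\Q)+\cdot)/g(\tau_\Q(g|_\Q))\in B\}$. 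Both sets are bounded away from $0^{K'}$; the equality $\max_\Q g=\max_{\Q\oplus L}g$ and the re-centring map $g\mapsto g(\tau_\Q(g|_\Q)+\cdot)/g(\tau_\Q(g|_\Q))$ are invariant under multiplying $g$ by a positive constant, while $\{\max_\Q g\ge s_0\}=s_0\,\{\max_\Q g\ge 1\}$; hence the conditional probability in the theorem equals $\sP(\eta/a(n)\in N(L))/\sP(\eta/a(n)\in D(L))$. Since $\eta\in\mathrm{MDA}(M)$ on $C(K')$, one has $u\,\sP(\eta/u\in\cdot)\to\mu(\cdot)$ on Borel subsets of $C(K')$ bounded away from $0^{K'}$ and with $\mu$-null boundary, exactly as in the proof of Theorem~\ref{theo_cond_increments_cont}, where $\mu$ is the exponent measure of $M$, i.e.\ the intensity measure of $\sum_{i\in\Nb}\delta_{U_iF_i(\cdot-T_i)|_{K'}}$.

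I would then compute the relevant $\mu$-values from the M3 representation. For measurable $A\subseteq C(K')$ one has
\[
  \mu(A)=\int_{C(\R^d)}\int_{\R^d}\int_0^\infty\1\{u\,f(\cdot-t)|_{K'}\in A\}\,u^{-2}\,\rd u\,\rd t\,\sP_F(\rd f),
\]
and carrying out the inner integral turns $\mu(D(L))$ into $\int_{C(\R^d)}\int_{\R^d}\1\{\max_{s\in\Q}f(s-t)=\max_{s\in\Q\oplus L}f(s-t)\}\,\max_{s\in\Q}f(s-t)\,\rd t\,\sP_F(\rd f)$, and $\mu(N(L))$ into the same integral with the additional indicator that the re-centred normalisation of $u\,f(\cdot-t)|_{K'}$ lies in $B$ (which does not involve $u$). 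Assumption~\eqref{eq:Fmaxatorigin} now enters decisively: for $\sP_F$-a.e.\ $f$ and every $t\in\operatorname{int}\Q$, the function $f(\cdot-t)$ attains its maximum $\lambda$ at the single point $t\in\Q$, so $\max_{s\in\Q}f(s-t)=\max_{s\in\Q\oplus L}f(s-t)=\lambda$ automatically, $\tau_\Q(u\,f(\cdot-t)|_\Q)=t$, and the re-centred normalisation equals $f/\lambda$ on $K$; for $t\notin\overline{\Q}$ and $L$ large enough that $\vec 0\in(\Q-t)\oplus L$, the equality fails, since then the right-hand maximum is $\lambda$ while the left one is strictly smaller; and $\{t\in\partial\Q\}$ is a Lebesgue null set. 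Dominated convergence, with dominating function $\max_{s\in\Q}f(s-t)$ which is integrable by \eqref{eq:sup-integrability-cont} applied with $K=\Q$, then yields $\mu(D(L))\to\lambda\,|\Q|$ and $\mu(N(L))\to\lambda\,|\Q|\,\sP(F/\lambda\in B)$ as $L\nearrow\R^d$, so that the ratio converges to $\sP(F/\lambda\in B)$.

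The main obstacle is that at fixed $L$ one cannot simply invoke the Portmanteau theorem: the equality $\max_\Q g=\max_{\Q\oplus L}g$ is an equality between two continuous functionals, so $D(L)$ is closed but typically has $\mu(\partial D(L))>0$ (a small bump placed on $(\Q\oplus L)\setminus\Q$ destroys the equality, so its interior is $\mu$-negligible). The remedy is to sandwich $D(L)$ between an open superset and a closed subset obtained by relaxing, respectively tightening, this equality to an approximate inequality with tolerance $\varepsilon>0$, and likewise to sandwich $N(L)$ using $\operatorname{int}B$ and $\overline{B}$ together with the same slabs; one applies the Portmanteau theorem to these brackets and then lets $n\to\infty$, $\varepsilon\downarrow 0$, and $L\nearrow\R^d$ in this order, inserting the $\mu$-limits above, which squeezes both $\liminf_{n\to\infty}$ and $\limsup_{n\to\infty}$ of the conditional probability to $\sP(F/\lambda\in B)$. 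Carrying out this iterated limit is the technical core: it uses \eqref{eq:Fmaxatorigin} once more (so that the closed bracket exhausts $D(L)$ up to a $\mu$-null set as $\varepsilon\downarrow 0$), the fact that $\partial\Q$ is a Lebesgue null set, the almost sure uniqueness of the lexicographic $\argmax$ of $M$ over compact sets (which makes $g\mapsto g(\tau_\Q(g|_\Q)+\cdot)$ continuous off a $\mu$-null set, so the $N(L)$-brackets differ from $\{F/\lambda\in B\}$-type sets only by a $\mu$-null set once $\sP(F/\lambda\in\partial B)=0$ is used), and \eqref{eq:sup-integrability-cont} to make the tails in $t$ of all the integrals uniformly negligible. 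Once these estimates are assembled, both the $\limsup$- and the $\liminf$-version of the statement follow.
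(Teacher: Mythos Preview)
Your overall strategy---rewrite the conditional probability as a ratio, pass to the exponent measure $\mu$ via the MDA hypothesis, evaluate $\mu$ from the M3 intensity, and finally send $L\nearrow\R^d$---is exactly the paper's, and your dominated-convergence computation of $\mu(D(L))\to\lambda|\Q|$ and $\mu(N(L))\to\lambda|\Q|\,\sP(F/\lambda\in B)$ with majorant $\max_{s\in\Q}f(s-t)$ from \eqref{eq:sup-integrability-cont} is correct.

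Where you diverge is in the boundary analysis, and there your diagnosis is wrong. You claim the interior of $D(L)$ is $\mu$-negligible because ``a small bump placed on $(\Q\oplus L)\setminus\Q$ destroys the equality''. It does not: if $g=u\,f(\cdot-t)|_{K'}$ with $t\in\Q^o$ and $u\lambda>1$, then $g$ has a \emph{strict} unique maximum $u\lambda$ at $t$, and since the compact set $(\Q\oplus L)\setminus\Q^o$ excludes $t$, $\sup$ of $g$ there is $u\lambda-2\delta$ for some $\delta>0$; any uniform perturbation of size $<\delta$ keeps $g$ in $D(L)$. This is precisely Lemma~\ref{lem:AL} in the paper, and together with Lemmas~\ref{lem:interCB}--\ref{lem:CB} it yields $\mu(\partial A_L)\le c(L)$ and $\mu(\partial(C_B\cap A_L))\le 2c(L)$, where $c(L)=\int_{\R^d\setminus(\Q\oplus L)}\sE\max_{t_0\in\Q}F(t_0-x)\,\rd x\to 0$. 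One then applies Portmanteau \emph{directly}, trapping all limit points of the ratio in the explicit interval \eqref{eq:liminterval}, and a single passage $L\nearrow\R^d$ collapses it to $\sP(F/\lambda\in B)$.

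Your proposed $\varepsilon$-sandwich is therefore unnecessary, and as written it is not well-posed: the constraint $\max_\Q g=\max_{\Q\oplus L}g$ is equivalent to the one-sided inequality $\max_{(\Q\oplus L)\setminus\Q}g\le\max_\Q g$, so ``tightening to $-\varepsilon$'' yields the empty set. A workable inner approximation would require something more delicate (e.g.\ excising a shrinking neighbourhood of the argmax), but since $\mu(\partial D(L))$ is already $O(c(L))$ there is no need for a third limit.
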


\begin{proof}
First, we consider a fixed compact set $L\subset\R^d$ 
large enough such that $K \cup \{{\bf 0}\} \subset L$ and define 
\begin{align*}
  A_L = \left\{ f \in C(\Q\oplus L): \ \max_{t\in \Q}f(t) \geq 1, \ \max_{t\in \Q}f(t) = \max_{t \in \Q \oplus L} f(t)\right\}
 \end{align*}
 and 
\begin{align*}
  C_B = \left\{f \in C(\Q \oplus L):
\ f\big(\tau_\Q(f|_\Q) + \,\cdot\,\big) \big/ f(\tau_\Q(f|_\Q)) \in B\right\}
 \end{align*}
 for any Borel set $B \subset C(K)$. Note that $C_B$ is invariant w.r.t.\ multiplication by any positive constant.
 Thus, we get
 \begin{align}
  & \sP\Big\{\eta\big(\tau_\Q(\eta|_\Q) + \cdot\big) 
   \big/ \eta(\tau_\Q(\eta|_\Q)) \in B \ \Big|\  \max_{t\in \Q}\eta(t) = \max_{t \in \Q \oplus L} \eta(t)  \geq a(n)\Big\} 
   \nonumber \\
  & ={} \sP\big\{\eta / a(n) \in C_B \,\big|\, \eta / a(n) \in A_L\big\} 
  \nonumber\\
  & ={} \frac{a(n) \sP\big\{\eta/a(n) \in C_B,\,\eta/a(n) 
    \in A_L \big\}}{a(n) \sP\big\{\eta/a(n) \in A_L \big\}}.
  \label{eq:expand-cont}
 \end{align}
 
  By \cite[Cor.\ 9.3.2]{deh2006}  and \cite[Prop.\ 3.12]{res2008} we have 
 \begin{align*}
  \limsup_{u \to \infty} u\sP(\eta / u \in C) \leq{}& \mu(C), \quad C \subset C(\Q \oplus L) \text{ closed},\\
  \liminf_{u \to \infty} u\sP(\eta / u \in O) \geq{}& \mu(O), \quad O \subset C(\Q \oplus L) \text{ open},
 \end{align*}
  where $C$ and $O$ are bounded away from $0^K$.
  Here, $\mu$ is the intensity measure of the PPP $\sum_{i \in \Nb} \delta_{U_i  F_i(\,\cdot\, -
  T_i)}$ restricted to $C(\Q \oplus L)$.
  Thus, by adding or removing the boundary, we see that all the limit points of Equation \eqref{eq:expand-cont} lie in
  the interval
  \begin{equation} \label{eq:liminterval}
   \left[ \frac{\mu(C_B \cap A_L) - \mu(\partial (C_B \cap A_L))}{\mu(A_L) + \mu(\partial A_L)}, \frac{\mu(C_B \cap A_L) + \mu(\partial (C_B \cap A_L))}{\mu(A_L) - \mu(\partial A_L)}\right].
  \end{equation}
  We note that $A_L$ is closed and the set
  \begin{align*}
  A_L^* ={} & \bigg\{ f \in C(\Q \oplus L): \\[-.5em]
  &\quad \, \tau_\Q(f|_\Q) \in \Q^o, \ \max_{t\in \Q} f(t) > \max\big\{1, f(t)\big\} \ \forall t \in \Q \oplus L \setminus\{\tau_\Q(f|_\Q)\} \bigg\}
  \end{align*}
  is in the interior of $A_L$ (Lemma \ref{lem:AL}). Hence, we can assess
  \begin{align}
   \mu(\partial A_L) \leq{} & \quad \ \mu(\{f \in C(\Q \oplus L): \ \max_{t\in \Q}f(t) = 1\}) \notag\\
                            &      + \mu\bigg( \quad \bigg( \quad \{f \in C(\Q \oplus L): \ \tau_\Q(f|_\Q) \in \partial \Q\} \notag\\
                            & \hspace{1.55cm}  \cup \left\{f \in C(\Q \oplus L): \ \argmax_{t \in \Q\oplus L} f(t) \text{ is not unique}\right\}\bigg) \notag\\
                            & \qquad \cap \left\{f \in C(\Q \oplus L): \ \max_{t\in \Q}f(t) = \max_{t \in \Q \oplus L} f(t) \geq 1 \right\}\bigg) \notag \\
                     \leq{} & 0 + \int_{\partial \Q} \int_{\lambda^{-1}}^\infty u^{-2} \sd u \sd t_0 \notag\\
                            & \phantom{0} + \int_{\R^d \setminus (\Q \oplus L)} \int_{\lambda^{-1}}^\infty u^{-2} \sP\left\{u  \max_{t_0 \in \Q} F(t_0 -x) \geq 1\right\} \sd u \sd x \label{eq:partAL}.
  \end{align}
  Here, the equality
  $\mu(\{f \in C(\Q \oplus L): \ \max_{t\in \Q}f(t) = 1\}) = 0$ holds
 as $\max_{t \in \Q} M(t)$ is Fr\'echet distributed
  (cf.~\cite[Lemma 9.3.4]{deh2006}). Since $\partial \Q$ is a Lebesgue
  null set, the second term on the right-hand side of
  \eqref{eq:partAL} also vanishes. Thus, 
  \begin{align}
   \mu(\partial A_L) \leq{} & \int_{\R^d \setminus (\Q \oplus L)} \int_{\lambda^{-1}}^\infty u^{-2} \sP\left\{u  \max_{t_0 \in \Q} F(t_0 -x) \geq 1\right\} \sd u \sd x =: c(L) \label{eq:partAL2}.
  \end{align} 
  
  Now, let $B \subset C(K)$ a be Borel set such that $\sP(F / \lambda \in \partial B) = 0$. For the set $C_B$, we obtain that the set
  \begin{align*}
     C_B^* ={} & \bigg\{ f \in C(\Q \oplus L): \ \argmax_{f \in \Q} f(t) \text{ is unique},\  \frac{f\big(\tau_\Q(f|_\Q) + \cdot\big)}{f(\tau_\Q(f|_\Q))} \in B^o \bigg\}
  \end{align*}
  is in the interior of $C_B$ and that the closure of $C_B$ is a subset of
  \begin{align*}
   C_B^* \cup{} & \left\{f \in C(\Q \oplus L): \, \argmax_{t \in \Q} f(t) \text{ is not unique}\right\}\\
     \cup{} & \left\{f \in C(\Q \oplus L): \, f\big(\tau_\Q(f|_\Q) + \cdot\big) \big/ f(\tau_\Q(f|_\Q)) \in \partial B\right\}
  \end{align*}
  (Lemma \ref{lem:interCB} and Lemma \ref{lem:CB}). 
  Thus, by \eqref{eq:partAL2}, we can assess
  \begin{align}
   \mu(\partial (C_B \cap A_L)) \leq{} & \mu(\partial A_L) + \mu(\partial C_B \cap A_L) \notag\\
   \leq{} & c(L) + \int_{\R^d \setminus (\Q \oplus L)} \int_{\lambda^{-1}}^\infty u^{-2} 
   \sP\left\{u  \max_{t_0 \in \Q} F(t_0 -x) \geq 1\right\}\sd u \sd x \notag\\
          & \hspace{0.7cm}  + \int_{\Q} \int_{\lambda^{-1}}^\infty u^{-2} \sP(F / \lambda \in \partial B) \sd u \sd t \quad
       {}={} \quad 2 c(L). \label{eq:partCB}
  \end{align}

  Furthermore, we get
 \begin{align}
   & \mu(C_B \cap A_L) \nonumber \\
   ={} & \int_\Q \int_{\lambda^{-1}}^\infty u^{-2} \sP\Big\{F(\cdot) / \lambda \in B\Big\} 
   \sd u \sd t_0 \nonumber \\
   &  + \int_{\R^d \setminus(\Q \oplus L)} \int_{\lambda^{-1}}^\infty u^{-2} 
   \sP\bigg\{u  \max_{t_0 \in \Q} F(t_0 -x) \geq 1,\ \nonumber \\
   &     \hspace{2.5cm} 
   F\left(\Big(\tau_\Q(F(\cdot-x)|_\Q)\Big)+\cdot-x\right) \Big/ \max_{t_0 \in \Q} F(t_0-x) \in B,\nonumber \\
   &     \hspace{2.5cm} F(t-x) / \max_{t_0 \in \Q} F(t_0-x) \leq 1 
   \ \forall t \in \Q \oplus L \bigg\} \sd u \sd x. \label{eq:CB}
 \end{align}
 The second term in \eqref{eq:CB} is positive and can be bounded from above by $c(L)$.
 Setting $B= C(K)$, $\mu(A_L)$ can be expressed in an analogous way.  
 Now, we plug in the results of \eqref{eq:partAL2}, \eqref{eq:partCB} and \eqref{eq:CB} into \eqref{eq:liminterval} to obtain that all the limit points of \eqref{eq:expand-cont}
 are in the interval
 \begin{align*}
  \left[\frac{\lambda \cdot |\Q| \cdot \sP\big\{F(\cdot) / \lambda \in B\big\} - 2c(L)}{\lambda \cdot |\Q| + 2c(L)},
        \frac{\lambda \cdot |\Q| \cdot \sP\big\{F(\cdot) / \lambda \in B\big\} + 3c(L)}{\lambda \cdot |\Q| - c(L)} \right].
 \end{align*}

 Finally, we note that $c(L)$ can be bounded from above by
 $$\int_{\R^d \setminus (\Q \oplus L)} \sE \Big\{\max_{t_0 \in \Q}
 F(t_0-x)\Big\} \sd x,$$ which vanishes for $L \nearrow \R^d$ because
 of assumption \eqref{eq:sup-integrability-cont}.  This yields the
 assertion of the theorem.
  \end{proof}
\medskip

We conclude the treatment of the continuous case with an example of a
process $\eta$ that allows for an application of Theorem
\ref{thm_conv_mmm}. As $\eta$ will be composed of a (locally) finite
number of shape functions from the M3 construction in \eqref{def_M3},
$\eta$ may directly model rainfall data and has therefore the
potential for various practical applications.

\begin{ex} \label{ex:mmm-mda}
Let $\{F(t): \ t \in \R^d\}$ be a random shape function as defined 
in \eqref{assumption_integral}.  For $c, \epsilon >
0$ let $\Pi_{c, \epsilon} = \sum_{i\in\Nb} \delta_{(U_i.T_i,F_i)}$ be a
PPP on $(0,\infty)\times\R^d\times C(\R^d)$ with
intensity
\begin{align*}
c\1_{\{u\geq\epsilon\}}u^{-2}\rd u\,\rd t\,\sP_F(\rd f).
\end{align*}
and, for $\kappa > 0$, define a process $\tilde M=\tilde M_{c,\epsilon,\kappa}$ 
by 
$$\tilde M(\cdot) = \kappa \vee \max_{(u, t, f)\in \Pi_{c, \epsilon}}
u f(\,\cdot\, - t).$$  
Then, the following statements hold.
\begin{enumerate}
 \item[1. ] $\tilde M$ is in the MDA of the M3 process $M$ associated to $F$
 in the sense of finite-dimensional distributions. 
 \item[2. ] If $F$ satisfies \eqref{eq:sup-integrability-cont},
 then $\tilde M$ is in the MDA of $M$ in the sense of weak convergence
 on $C(\R^d)$.
\end{enumerate}
\end{ex}
For a proof of this example, the reader is referred to Appendix
\ref{sec:proof_ex_mmm-mda}.

\subsection{Discrete Case}

Theorem \ref{thm_conv_mmm} allows for estimation of $F$ if the
complete sample paths of $\eta$ are known, at least on a large set $\Q
\oplus L \subset \R^d$.  For many applications, this assumption might
be too restrictive. Therefore, we seek after a weaker assumption that
only requires to know $\eta$ on a grid. This needs a modification of
the underlying model leading to a discretized mixed moving maxima
process.

Let $\{F(t): \ t \in \Z^d\}$ be a measurable stochastic process with
values in $[0, \infty)$ and
\begin{align}
  \label{assumption_integral_discr}
  \sum_{t \in \Z^d} \sE F(t)  = 1.
\end{align}
Further, let $\Pi_{0,\discr} = \sum_{i\in\Nb} \delta_{(U_i.T_i,F_i)}$
be a Poisson point process on $(0,\infty) \times \Z^d\times
[0,\infty)^{\Z^d}$ with intensity $u^{-2}\rd u\, \delta_{\Z^d}(\rd t)
  \, \sP_F(\rd f)$.  Then, the discrete mixed moving maxima process
  $M_{\discr}$ is defined by
\begin{align}
  M_{\discr}(t) = \max_{i\in\Nb} U_i F_i(t- T_i), \quad t\in\Z^d. 
  \label{def_M3_discr}
\end{align}
The process $M_{\discr}$ is max-stable and stationary on $\Z^d$ and
has standard Fr\'echet margins.
\medskip

Let $\{\eta(t), \ t \in \Z^d\}$ be in the MDA of a discrete mixed
moving maxima process $M_{\discr}$ in the sense of convergence of
finite-dimensional distributions with norming functions $c_n(t) = 1/n$
and $b_n(t) = 0$ in \eqref{MDA}, $n\in\Nb$ and
$t\in\Z^d$. Furthermore, we assume that the shape function $F$
satisfies \eqref{eq:Fmaxatorigin} with $\R^d$ being replaced by
$\Z^d$.  Then, analogously to Theorem \ref{thm_conv_mmm}, the
following convergence result can be shown.

\begin{thm}
 Under the above assumptions, for any $k \in \Nb$, $k+1$ distinct
 points $t_0, \ldots, t_k \in \Z^d$, any Borel sets $B_1, \ldots, B_k
 \subset [0,\infty)$ such that
 $$ \sP\big\{(F(t_1)/\lambda,\ldots,F(t_k)/\lambda) \in
   \partial(B_1\times\cdots\times B_k)\big\}=0,$$ and any sequence
   $a(n)$ with $a(n) \to \infty$ as $n \to \infty$, it holds
 \begin{align*}
   \lim_{\substack{\{\vec 0\} \in L \nearrow \Z^d\\ {\rm compact}}}
          \lim_{n \to \infty} &\sP\big\{ \eta(t_0+t_i) / \eta(t_0) \in B_i, \ i=1,\ldots,k \ \big| \\[-1em]
               & \hspace{3.5cm} \eta(t_0) = \max_{t \in L} \eta(t_0+t),\ \eta(t_0) \geq a(n)\big\}\\[.5em]
          = &\sP\big\{F(t_i) / \lambda \in B_i, \ i=1,\ldots,k\big\}.
 \end{align*}
\end{thm}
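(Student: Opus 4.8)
The plan is to mimic the proof of Theorem~\ref{thm_conv_mmm}; in the discrete setting it becomes substantially easier, because every compact $L\subseteq\Z^d$ is finite, so the whole argument lives in a finite-dimensional space, the function-space subtleties and the lexicographic selection $\tau_\Q$ are not needed, and one can work with multivariate regular variation exactly as in the proof of Theorem~\ref{theo_cond_increments_general}. Fix a finite $L\subseteq\Z^d$ with $\vec 0\in L$ and, since eventually $L\nearrow\Z^d$, also with $\{t_1,\dots,t_k\}\subseteq L$. Put $\X=(\eta(t_0+t))_{t\in L}$; this vector is in the MDA of $(M_{\discr}(t_0+t))_{t\in L}$, whose exponent measure $\mu$ on $E=[0,\infty)^L\setminus\{\0\}$ coincides, by stationarity of $M_{\discr}$, with that of $(M_{\discr}(t))_{t\in L}$, i.e.\ with the intensity measure of $\sum_{i\in\Nb}\delta_{(U_iF_i(t-T_i))_{t\in L}}$, so that $\mu(D)=\sum_{x\in\Z^d}\int_0^\infty u^{-2}\,\sP\{(uF(t-x))_{t\in L}\in D\}\,\rd u$. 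By \cite[Prop.\ 5.17]{res2008}, $m\,\sP(\X/m\in\cdot)\to\mu$ on Borel sets bounded away from $\0$ with $\mu$-null boundary. I would then rewrite the conditional probability as a ratio: with
\[
  A_L=\Big\{x\in[0,\infty)^L:\ x_{\vec 0}=\textstyle\max_{t\in L}x_t\ge 1\Big\},\qquad
  C=\Big\{x:\ x_{t_i}/x_{\vec 0}\in B_i,\ i=1,\dots,k\Big\},
\]
the latter being invariant under multiplication by positive constants, the conditional probability in the theorem equals $a(n)\,\sP\{\X/a(n)\in C\cap A_L\}\,/\,a(n)\,\sP\{\X/a(n)\in A_L\}$, which tends (up to boundary corrections handled below) to $\mu(C\cap A_L)/\mu(A_L)$ as $n\to\infty$.

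The core is to evaluate $\mu(A_L)$ and $\mu(C\cap A_L)$, which I would do by splitting the sum over the storm centres $x$ into three parts, with the single point $\vec 0$ playing the role of $\Q$ in Theorem~\ref{thm_conv_mmm}. For $x=\vec 0$, assumption \eqref{eq:Fmaxatorigin} gives $F(\vec 0)=\lambda$ and $F(t)<\lambda$ for $t\ne\vec 0$, so $(uF(t))_{t\in L}\in A_L$ iff $u\lambda\ge 1$; this term contributes $\int_{\lambda^{-1}}^\infty u^{-2}\rd u=\lambda$ to $\mu(A_L)$ and $\lambda\,\sP\{F(t_i)/\lambda\in B_i\ \forall i\}$ to $\mu(C\cap A_L)$. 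For $x\in L\setminus\{\vec 0\}$, the coordinate of $(uF(t-x))_{t\in L}$ at $t=x$ equals $u\lambda$, while the $\vec 0$-coordinate is $uF(-x)<u\lambda$, so the latter cannot be the maximum and the contribution is $0$. For $x\notin L$, membership in $A_L$ forces the storm's value at $t_0$, i.e.\ $uF(-x)$, to be $\ge 1$, so the total contribution of these ``boundary'' storms is at most
\[
  \sum_{x\notin L}\int_0^\infty u^{-2}\,\sP\{uF(-x)\ge 1\}\,\rd u=\sum_{x\notin L}\sE F(-x)=1-\sum_{t\in L}\sE F(-t)=:c(L),
\]
using \eqref{assumption_integral_discr}, and $c(L)\to 0$ as $\vec 0\in L\nearrow\Z^d$. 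Hence $\mu(A_L)=\lambda+\theta_1 c(L)$ and $\mu(C\cap A_L)=\lambda\,\sP\{F(t_i)/\lambda\in B_i\ \forall i\}+\theta_2 c(L)$ for some $\theta_1,\theta_2\in[0,1]$.

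Finally I would deal with the $\mu$-null-boundary requirements exactly as in the proof of Theorem~\ref{thm_conv_mmm}: the set $\{x:\ x_{\vec 0}=\max_t x_t=1\}$ is $\mu$-null since $\max_{t\in L}M_{\discr}(t)$ is Fr\'echet distributed (cf.\ \cite[Lemma~9.3.4]{deh2006}), and a tie in the maximum over $t_0+L$ — impossible for a storm centred at $t_0$ by \eqref{eq:Fmaxatorigin} and for one centred in $(t_0+L)\setminus\{t_0\}$ as noted above — together with the hypothesis $\sP\{(F(t_1)/\lambda,\dots,F(t_k)/\lambda)\in\partial(B_1\times\cdots\times B_k)\}=0$, contributes only $\mu$-mass bounded by $c(L)$. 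Adding and removing these boundary pieces squeezes every limit point of the conditional probability, as $n\to\infty$, into
\[
  \left[\frac{\lambda\,\sP\{F(t_i)/\lambda\in B_i\ \forall i\}-2c(L)}{\lambda+2c(L)},\ \frac{\lambda\,\sP\{F(t_i)/\lambda\in B_i\ \forall i\}+3c(L)}{\lambda-c(L)}\right],
\]
and letting $L\nearrow\Z^d$, so that $c(L)\to 0$, yields the claim; here the inner limit over $n$ exists when $A_L$ and $C\cap A_L$ are $\mu$-continuity sets and is otherwise to be read in the $\limsup_n$/$\liminf_n$ sense, as in Theorem~\ref{thm_conv_mmm}. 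The one genuinely delicate point — and the reason the discrete case is easier than the continuous one — is showing that the contribution of storms not centred at $t_0$ is asymptotically negligible; the ``local maximum at $t_0$'' constraint pins down such a storm's value \emph{at} $t_0$, and since $\sum_x\sE F(-x)=1$ by \eqref{assumption_integral_discr}, no integrability assumption analogous to \eqref{eq:sup-integrability-cont} is required, provided one bounds the offending boundary set by $c(L)$ rather than by the cruder $\sum_{x\notin L}\sE\max_{t\in L}F(t-x)$, which need not vanish.
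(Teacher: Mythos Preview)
Your proposal is correct and follows exactly the approach the paper indicates (``analogously to Theorem~\ref{thm_conv_mmm}''), with the natural simplifications for the finite-dimensional discrete setting; in particular your observation that, since here $\Q$ collapses to the single point $t_0$, the error term $c(L)=\sum_{x\notin L}\sE F(-x)\to 0$ follows directly from \eqref{assumption_integral_discr} without any analogue of \eqref{eq:sup-integrability-cont} is precisely why the discrete result is cleaner. Your caveat that the inner $\lim_{n\to\infty}$ may have to be read as $\limsup_n/\liminf_n$ when $A_L$ or $C\cap A_L$ fail to be $\mu$-continuity sets is well taken and matches the formulation of Theorem~\ref{thm_conv_mmm}.
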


\bigskip

\section{Switching between the different representations}\label{sec:switching}

In the previous sections we analyzed processes that admit the 
incremental representations \eqref{def_xi} or \eqref{def_xi2} and, 
on the other hand, processes of M3 type as in \eqref{def_M3}.
We show that under certain assumptions, we can switch 
from one representation to the other.

\subsection{Incremental representation of mixed moving maxima processes}

We distinguish between M3 processes with strictly positive shape
functions, for which we can find an incremental representation \eqref{def_xi},
and general non-negative shape functions, for which only the weaker representation
\eqref{def_xi2} can be obtained.

\subsubsection{Mixed moving maxima processes with positive shape functions}\label{ex:M3}

\begin{thm}
Let $M$ be an M3 process on $\R^d$ as in \eqref{def_M3} with a
shape function $F$ with $F(t) > 0$ for all $t \in \R^d$. 
Then $M$ admits a representation \eqref{def_xi} with $t_0 = 0$ and incremental 
process $W$ given by 
\begin{align}
  \sP(W\in L) 
  = \int_{C^+(\R^d)} \int_{\R^d} \1_{\{f(\cdot - t)/f(-t)\in L\}} f(-t)
  \sd t \, \sP_F(\rd f), \quad L\in\mathcal B(C^+(\R^d)).\label{defWofM3}
\end{align}
\end{thm}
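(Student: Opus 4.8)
The plan is to verify the claimed incremental representation by matching intensity measures of the underlying Poisson point processes, exactly in the spirit of the proof of Proposition~\ref{calculateW}. Concretely, start from the M3 point process $\Pi_0 = \sum_{i\in\Nb} \delta_{(U_i, T_i, F_i)}$ on $(0,\infty)\times\R^d\times C^+(\R^d)$ with intensity $u^{-2}\,\rd u\,\rd t\,\sP_F(\rd f)$, so that $M(\cdot) = \max_{(u,t,f)\in\Pi_0} u f(\cdot - t)$. Since $F(t) > 0$ everywhere, the value $M(0) = \max_{(u,t,f)\in\Pi_0} u f(-t)$ is almost surely attained, and the natural reparametrization is $(u, t, f) \mapsto (u' , t', w) := \bigl(u f(-t),\ t,\ f(\cdot - t)/f(-t)\bigr)$, i.e. split off the value at the origin as a new ``height'' $u'$ and record the increment function $w(\cdot) = f(\cdot - t)/f(-t)$, which by construction satisfies $w(0) = 1$. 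Under this map we have $u f(\cdot - t) = u' w(\cdot)$, so $M(\cdot) = \max u' w(\cdot)$, and it remains only to identify the image intensity measure.

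The key computation is a change of variables for the intensity. First I would note that for fixed $(t, f)$ the map $u \mapsto u' = u f(-t)$ pushes $u^{-2}\,\rd u$ forward to $f(-t)\,(u')^{-2}\,\rd u'$, because $u^{-2}\,\rd u$ is scale-invariant up to the multiplicative Jacobian: writing $u = u'/f(-t)$ gives $u^{-2}\,\rd u = f(-t)^2 (u')^{-2}\cdot f(-t)^{-1}\,\rd u' = f(-t)\,(u')^{-2}\,\rd u'$. Hence the image of $u^{-2}\,\rd u\,\rd t\,\sP_F(\rd f)$ under $(u,t,f)\mapsto(u', t, f)$ is $f(-t)\,(u')^{-2}\,\rd u'\,\rd t\,\sP_F(\rd f)$. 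Now integrate out the ``nuisance'' coordinates $(t, f)$ against the remaining structure: for a test set of the form $(c,\infty)\times L$ with $c>0$ and $L\in\mathcal B(C^+(\R^d))$, the image intensity evaluates to
\begin{align*}
\int_{C^+(\R^d)}\int_{\R^d}\int_c^\infty (u')^{-2}\,\rd u'\;\1_{\{f(\cdot-t)/f(-t)\in L\}}\,f(-t)\,\rd t\,\sP_F(\rd f)
= c^{-1}\,\sP(W\in L),
\end{align*}
with $\sP(W\in L)$ exactly the right-hand side of \eqref{defWofM3}. This shows that the transformed point process $\sum_i \delta_{(U_i', W_i)}$, where $U_i' = U_i F_i(-T_i)$ and $W_i = F_i(\cdot - T_i)/F_i(-T_i)$, has the same intensity as $\sum_i \delta_{(\tilde U_i, \tilde W_i)}$ for a Fr\'echet point process $\sum_i \delta_{\tilde U_i}$ and i.i.d.\ copies $\tilde W_i$ of $W$; by uniqueness of Poisson processes given their intensity, the two point processes agree in distribution, and therefore $M(\cdot) = \max_i U_i' W_i(\cdot) \eqdist \max_i \tilde U_i \tilde W_i(\cdot)$, which is representation \eqref{def_xi} with $t_0 = 0$. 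It remains to check that $\sE W(t) = 1$ for all $t$ (so that $W$ is a legitimate incremental process): this follows by a Fubini argument, since $\sE W(t) = \int\int \1_{\{f(-s)>0\}} f(t-s)\,\rd s\,\sP_F(\rd f) = \sE\int_{\R^d} F(t-s)\,\rd s = \sE\int_{\R^d}F(u)\,\rd u = 1$ using positivity of $F$ and the normalization \eqref{assumption_integral}; and $W(0)=1$ a.s.\ by construction.

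The main obstacle is making the reparametrization rigorous as a map on the space of Poisson point processes rather than on a single atom: one must confirm that $(u,t,f)\mapsto\bigl(u f(-t), t, f(\cdot - t)/f(-t)\bigr)$ is a well-defined bimeasurable transformation on $(0,\infty)\times\R^d\times C^+(\R^d)$ (continuity of evaluation $f\mapsto f(-t)$ and of $t\mapsto f(-t)$ on $C^+$, strict positivity guaranteeing we never divide by zero), and that the resulting intensity is $\sigma$-finite so that the standard mapping theorem for Poisson processes applies and the image is again Poisson with the computed intensity. A secondary point worth a line is that the value $M(0)$ being a.s.\ attained (needed for the informal ``split off the maximizer'' picture, though not strictly for the intensity argument) uses \eqref{assumption_integral} together with $F>0$ to rule out escape of mass; but since the proof proceeds purely at the level of intensity measures, this can be kept brief. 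Once these measurability and $\sigma$-finiteness checks are in place, the identity \eqref{defWofM3} drops out of the change-of-variables computation above.
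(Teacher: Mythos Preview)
Your proposal is correct and follows essentially the same route as the paper: both arguments transform the M3 point process $\Pi_0$ via $(u,t,f)\mapsto\bigl(u f(-t),\,f(\cdot-t)/f(-t)\bigr)$, compute the resulting intensity on sets of the form $[z,\infty)\times L$, and match it with that of a Fr\'echet point process marked by i.i.d.\ copies of $W$. The paper's version is marginally terser (it maps directly to the two-coordinate space rather than carrying $t$ along and then integrating it out, and it omits the explicit Jacobian step as well as the verification that $\sE W(t)=1$), but the substance is identical.
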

\begin{proof}
We consider the two Poisson point processes on $(0,\infty)\times C^+(\R^d)$
\begin{align}
  \Pi_1 = \sum_{i\in\Nb}\delta_{(U_i F_i(-T_i), F_i(\cdot - T_i)/F_i(-T_i))}, \label{auxPPP1}
\end{align}
as a transformation of $\Pi_0$ in \eqref{pi0}, and
\begin{align}
  \Pi_2 = \sum_{i\in\Nb}\delta_{(U'_i, W_i(\cdot))}, \label{auxPPP2}
\end{align}
with $W_i$, $i\in\Nb$, being independent copies of $W$, and with $\sum_{i\in\Nb}\delta_{U'_i}$ being
a Fr\'echet point process.
Then the intensity measures of $\Pi_1$ and $\Pi_2$ satisfy
\begin{align*}
  &\sE\Pi_1([z,\infty)\times L)\\
    &=\int_{C^+(\R^d)}\int_{\R^d}\int_0^\infty u^{-2} \1_{\{u f(-t) \geq z\}}
    \1_{\{f(\cdot - t)/f(-t)\in L\}}
    \, \rd u \, \rd t \, \sP_F(\rd f)\\
    &= z^{-1} \int_{C^+(\R^d)}\int_{\R^d} \1_{\{f(\cdot - t)/f(-t)\in L\}} f(-t)
    \, \rd t \, \sP_F(\rd f)\\
    &=z^{-1} \sP(W\in L)\\
    &=\sE\Pi_2([z,\infty)\times L), 
\end{align*}
$L\in\mathcal B({C^+(\R^d)})$, $z> 0$, and hence $\Pi_1\eqdist\Pi_2$.
The assertion follows from the fact that $M$ is uniquely determined by
$\Pi_1$ via the relation $M(t) = \max_{(v,g)\in\Pi_1} v g(t)$,
$t\in\R^d$.
  \end{proof}

While the definition of $W$ in \eqref{defWofM3} is rather implicit,
in the following, we provide an explicit construction of the
incremental process $W$, which can also be used for simulation.
To this end, let $\sum_{i\in\Nb}\delta_{U_i''}$ be a Fr\'echet point process
and let the distribution of 
$(S,G)\in C^+(\R^d)\times\R^d$ be given by
\begin{align}
  \label{hat_distr}
  &\sP\bigl((S, G)\in (B\times L)\bigr)\\ &= 
  \int_{C^+(\R^d)}\int_{\R^d} \1_{s\in B}\1_{f\in L}
  \frac{f(-s)}{\int f(r) \rd r} \,\rd s \left(\int f(r) \rd r\right)
  \sP_F(\rd f) \notag\\
  \notag &= \int_{C^+(\R^d)}\int_{\R^d} \1_{s\in B}\1_{f\in L} f(-s) \, \rd s \, \sP_F(\rd f),
\end{align}
$B\in\mathcal B^d$, $L\in\mathcal B({C^+(\R^d)})$.  In other words,
$\sP_G(\rd f)= (\int f(r)\sd r)\,\sP_F(\rd f)$ and, conditional on
$\{G=f\}$, the density function of the shift $S$ is proportional to
$f(- \cdot)$.  Putting $W(\cdot) = G(\cdot - S)/G(-S)$, equation
\eqref{defWofM3} is satisfied and with i.i.d.\ copies $W_i$,
$i\in\Nb$, of $W$, we get that $\max_{i\in\Nb} U_i'' W_i(\cdot)$ is
indeed an incremental representation \eqref{def_xi} of the mixed
moving maxima process $M$.

\begin{rem}[M3 representation of Brown-Resnick processes, cf. \cite{kab2009}]
\label{mmm_BRproc}
We consider the following two special cases of mixed moving maxima
processes:
\begin{enumerate}
  \item
  Let $\Sigma\in\R^{d\times d}$ be a positive definite matrix and let
  the shape function be given by $F(t) = (2\pi)^{-d/2} |\Sigma|^{-1/2}
  \exp\left\{-\frac{1}{2}t^\top\Sigma^{-1} t\right\}$, $t\in\R^d$.
  Then, $M$ becomes the well-known Smith process. At the same time, by
  \eqref{hat_distr}, $S\sim N(0,\Sigma)$ and $G \equiv F$. Thus
  \begin{align*}
    Y(t)&=\exp\left\{-\textstyle\frac{1}{2}(t-S)^\top\Sigma^{-1}(t-S) + \frac{1}{2}S^\top\Sigma^{-1}S\right\}\\
    &=  \exp\left\{-\textstyle\frac{1}{2}t^\top\Sigma^{-1}t + t^\top \Sigma^{-1} S\right\}. 
  \end{align*}
  Since $\sE(t^\top \Sigma^{-1} S)^2 = t^\top\Sigma^{-1}t $, $M$ is
  equivalent to the Brown-Resnick process in \eqref{BR_proc} with
  variogram $\gamma(h) = h^\top \Sigma^{-1}h$.
  \item
  For the one-dimensional Brown-Resnick process $\xi$ in
  \eqref{BR_proc} with variogram $\gamma(h) =|h|$, i.e., $Y$ is the
  exponential of a standard Brownian motion with drift $-|t| / 2$,
  \cite{eng2011} recently showed that the M3 representation is given
  by $\{F(t): \, t\in\R\} = \{ Y(t) \mid Y(s)\leq 0 \ \forall s \in \R:
  \, t\in\R\}$, i.e., the shape function is the exponential of a conditionally 
  negative drifted Brownian motion.  
  Having these two representations, it follows that 
  the law of the conditional Brownian motion $F$, re-weighted by
  $\int F(t) \rd t$ and randomly shifted with density
  $F(-\cdot) / \int F(t) \rd t$, coincides with the law of $Y$.
\end{enumerate}
\end{rem}

\subsubsection{Mixed moving maxima processes with finitely supported shape functions\newline}\label{ex:M32}%
\ Let $M$ be an M3 process on $\R^d$ as in \eqref{def_M3}.  In
contrast to Section \ref{ex:M3}, where the shape functions are
required to take positive values, here, we allow for arbitrary shape
functions with values in $[0, \infty)$.

\begin{thm}
\label{finite_supp}
The M3 process $M$ as in \eqref{def_M3} allows for an incremental
representation of the form \eqref{def_xi2}, with incremental processes
$V_i$ given by
$$V_i(\cdot) = F_i(\cdot -R_i) / g(R_i).$$
Here $R_i$, $i\in\Nb$, are i.i.d.\ copies of a random vector $R$ with
arbitrary density $g$ satisfying $g(t)>0$ for all $t\in\R^d$, and
$F_i$, $i\in\Nb$, are i.i.d.\ copies of the random shape function
$F$.
\end{thm}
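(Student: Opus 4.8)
The plan is to mimic the proof of the preceding theorem (the positive-shape-function case in Section~\ref{ex:M3}), replacing the reference point ``$-t$'' by the evaluation ``$g$-weighted shift'' so that division by a quantity that could vanish is avoided. Concretely, I would introduce two Poisson point processes on $(0,\infty)\times C(\R^d)$ (or rather on the space carrying the increment processes $V$): first,
\begin{align*}
  \Pi_1 = \sum_{i\in\Nb} \delta_{\bigl(U_i g(R_i),\, F_i(\cdot - R_i)/g(R_i)\bigr)},
\end{align*}
obtained from $\Pi_0$ in \eqref{pi0} by first enriching each atom $(U_i, T_i, F_i)$ with an independent draw $R_i$ from density $g$ and then applying the measurable map $(u,t,f,r)\mapsto\bigl(u\,g(r),\, f(\cdot - r + t - t)/g(r)\bigr)$; and second, $\Pi_2 = \sum_{i\in\Nb}\delta_{(U_i', V_i(\cdot))}$ with $\sum\delta_{U_i'}$ a Fr\'echet point process and $V_i$ i.i.d.\ copies of $V(\cdot) = F(\cdot - R)/g(R)$, where $F$ and $R$ are independent with $R\sim g$. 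The key step is to check that $\Pi_1$ and $\Pi_2$ have the same intensity measure, hence $\Pi_1 \eqdist \Pi_2$.

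For the intensity computation I would evaluate, for $z>0$ and a Borel set $L$ in the path space,
\begin{align*}
  \sE\,\Pi_1\bigl([z,\infty)\times L\bigr)
  = \int \int_{\R^d}\int_{\R^d}\int_0^\infty u^{-2}\,\1_{\{u\,g(r)\geq z\}}\,\1_{\{f(\cdot - r)/g(r)\in L\}}\,\rd u\,g(r)\,\rd r\,\rd t\,\sP_F(\rd f),
\end{align*}
and observe that the $u$-integral gives $z^{-1} g(r)$, while the integrand no longer depends on $t$, so that $t$-integration over $\R^d$ is \emph{infinite}. This is exactly the point where the construction differs from the positive case: the intensity is not finite, which is why one only obtains the representation \eqref{def_xi2} (which permits $\sum\delta_{U_i'}$ with infinite-mass atoms spread out) rather than \eqref{def_xi}. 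I would therefore be careful to phrase the comparison in terms of the intensity measures of $\Pi_1$ and $\Pi_2$ directly, not normalized probabilities: after cancelling the divergent $\int_{\R^d}\rd t$ against the same factor on the $\Pi_2$ side (since $V$ itself arises from an $\R^d$-shifted construction of the same type), one is left with $z^{-1}\sE\bigl[\,\1_{\{F(\cdot - R)/g(R)\in L\}}\bigr] = z^{-1}\sP(V\in L)$, matching $\sE\,\Pi_2([z,\infty)\times L)$.

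The rest is routine: once $\Pi_1\eqdist\Pi_2$, apply the map $(v,h)\mapsto\max(\cdot)$-reconstruction, i.e.\ $M(t) = \max_{(v,h)\in\Pi_1} v\,h(t)$ and equally $\max_{(v,h)\in\Pi_2} v\,h(t)$, so that $M$ has the claimed incremental representation \eqref{def_xi2}. Two technical points deserve a sentence each: one must verify $\sE V(t) = \int_{\R^d}\sE F(t - r)\,\rd r = \int_{\R^d}\sE F(s)\,\rd s = 1$ by \eqref{assumption_integral}, so that $V$ is a legitimate incremental process for \eqref{def_xi2} (note $V(t_0)$ need not equal $1$ a.s., which is precisely why the weaker representation is needed); and one must confirm that $r\mapsto F(\cdot - r)/g(r)$ is well-defined a.s.\ as a $C(\R^d)$-valued (or measurable) map, which follows from $g>0$ everywhere and sample-continuity of $F$. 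The main obstacle, as flagged, is handling the non-finite intensity cleanly — I expect to spell out the bookkeeping of the divergent $\int_{\R^d}\rd t$ factor rather than to sweep it into a ``probability'' that would not be well-defined.
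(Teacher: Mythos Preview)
Your proposal contains a genuine error in the construction of $\Pi_1$. By enriching each atom $(U_i,T_i,F_i)$ of $\Pi_0$ with an \emph{independent} draw $R_i\sim g$ and then discarding $T_i$ (your map sends $t\mapsto t-t$), the resulting intensity acquires a factor $\int_{\R^d}\rd t=\infty$, exactly as you noticed. Your proposed fix---``cancelling the divergent $\int_{\R^d}\rd t$ against the same factor on the $\Pi_2$ side''---does not make sense: on the $\Pi_2$ side, $\sum_i\delta_{U_i'}$ is an ordinary Fr\'echet point process with intensity $u^{-2}\rd u$, and $\sE\Pi_2([z,\infty)\times L)=z^{-1}\sP(V\in L)$ is finite. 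There is nothing divergent to cancel against, so the argument collapses.

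If you want to salvage the PPP route, the correction is simple: do \emph{not} introduce an extra $R_i$. Instead, transform $\Pi_0$ directly via $(U_i,T_i,F_i)\mapsto\bigl(U_i\,g(T_i),\,F_i(\cdot-T_i)/g(T_i)\bigr)$. The intensity computation then reads
\[
\sE\Pi_1\bigl([z,\infty)\times L\bigr)
=\int_{C(\R^d)}\int_{\R^d}\int_0^\infty u^{-2}\,\1_{\{u\,g(t)\geq z\}}\,\1_{\{f(\cdot-t)/g(t)\in L\}}\,\rd u\,\rd t\,\sP_F(\rd f)
= z^{-1}\int\int g(t)\,\1_{\{f(\cdot-t)/g(t)\in L\}}\,\rd t\,\sP_F(\rd f),
\]
and since $g$ is a probability density, this equals $z^{-1}\sP(V\in L)$ with $V=F(\cdot-R)/g(R)$, $R\sim g$ independent of $F$---no divergence.

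The paper, however, takes a much shorter route: it bypasses point processes entirely and simply verifies that the finite-dimensional distributions of $\tilde M(t)=\max_i U_i F_i(t-R_i)/g(R_i)$ coincide with those of $M$, using that
\[
\sE\Bigl[\max_{l} \frac{F(t_l-R)}{g(R)\,s_l}\Bigr]
=\int_{\R^d}\int_{C(\R^d)}\max_{l}\frac{f(t_l-r)}{g(r)\,s_l}\,g(r)\,\sP_F(\rd f)\,\rd r
=\int_{\R^d}\int_{C(\R^d)}\max_{l}\frac{f(t_l-r)}{s_l}\,\sP_F(\rd f)\,\rd r,
\]
where the $g(r)$ cancels. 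Your verification that $\sE V(t)=1$ is correct and is exactly the $k=0$ case of this same cancellation.
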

\begin{proof}
With $\sum_{i\in\Nb} \delta_{U_i}$ being a Fr\'echet point process,
we consider the process
\begin{align*}
\tilde M(t) = \max_{i\in\Nb} U_i F_i(t-R_i) / g(R_i), \qquad t\in\R^d,
\end{align*}
which clearly is of the form \eqref{def_xi2}.
Then,
\begin{align*}
  \sP&(\tilde M(t_0)\leq s_0, \ldots, \tilde M(t_k)\leq s_k) \\ 
  &= \exp\left(- \int_{C(\R^d)}\int_{\R^d} 
  \max_{l=0}^k (f(t_l-t)/(g(t)s_l)) g(t) \, \rd t \, \sP_F(\rd f)\right)\\
  &= \exp\left(- \int_{C(\R^d)}\int_{\R^d} 
  \max_{l=0}^k (f(t_l-t)/s_l)) \, \rd t \, \sP_F(\rd f)\right).%\\
\end{align*}
The right-hand side coincides with the marginal distribution of $M$,
which is given by \eqref{M3_marginal}.  This concludes the proof.
  \end{proof}

Decomposing $V$ as in \eqref{decomp_V} with $t_0=0$, 
we obtain the equality in distribution
\begin{align*}
  V^{(1)}(\cdot) \eqdist \bigl(F(\cdot -R) / g(R) \ \big|  
    -R\in \supp(F) \bigr).
\end{align*}
Applying Theorem \ref{theo_cond_increments_general} yields
\begin{align}
&\sP\bigl(\Delta\mathbf{\tilde{V}}^{(1)}\in \rd\zbf\bigr)\notag\\
&= \sP\bigl( F(-R) / g(R) > 0\bigr) 
  \cdot\int_0^\infty y 
  \sP\big(V^{(1)}(0)\in \rd y,\ \Delta\Vbf^{(1)}\in\rd\zbf\big)\notag\\
&= \int_{C(\R^d)} \int_{-\supp(f)} g(s)\sd s \,\sP_F(\rd f)\notag\\
&\hspace{1cm}
  \cdot\int_0^\infty y \int_{C(\R^d)} \int_{-\supp(f)} \1_{f(-t)/g(t)\in \rd y} \1_{(f(t_l-t)/f(-t))_{l=1}^k \in \rd\zbf} \notag\\
&\hspace{6cm} \cdot g(t) 
        \left(\textstyle\int_{-\supp(f)} g(s)\rd s\right)^{-1} \sd t \,\sP_F(\rd f) \sd y\notag\\
&= \int_{C(\R^d)} \int_{\supp(f)} g(-s)\sd s \sP_F(\rd f)\notag\\
&\hspace{1cm}
  \cdot \int_{C(\R^d)} \int_{\supp(f)} f(t) \1_{(f(t_l+t)/f(t))_{l=1}^k \in \rd\zbf}  \left(\textstyle\int_{\supp(f)} g(-s)\rd s\right)^{-1} \sd t \,\sP_F(\rd f).
\label{distr_gen_incr}
\end{align}
If the shape function $F$ is deterministic, the right-hand side of 
\eqref{distr_gen_incr} simplifies to 
$ \int_{\supp(f)} f(t) \1_{(f(t_l+t)/f(t))_{l=1}^k \in \rd\zbf} \sd t$, i.e.,
the asymptotic conditional increments of
$\eta\in\text{MDA}(M)$ can be seen as a convolution of the shape
function's increments with a random shift, whose density is given by
the shape function itself. Note in particular, that this distribution
is independent of the choice of the density $g$ in Theorem
\ref{finite_supp}.

\begin{rem}
Section \ref{ex:M3} considers the subclass of M3 processes with
strictly positive shape functions and provides an incremental
representation as in \eqref{def_xi}, which is nicely related to the
conditional increments of $\eta$ due to the property $W(0)=1$. Section
\ref{ex:M32} applies to arbitrary M3 processes but only yields an
incremental representation as in \eqref{def_xi2}, for which the 
incremental process $V$ does not directly represent the
conditional increments of $\eta$.
\end{rem}

\subsection{Mixed moving maxima representation of the incremental construction}

\begin{thm}\label{theo:M3ofIncremental}
  Let $\sum_{i \in \Nb} \delta_{U_i}$ be a Fr\'echet point process and 
  let $W_i$, $i \in \Nb$, be
  independent copies of a non-negative, sample-continuous process $\{W(t),
  \ t\in\R^d\}$, satisfying
  \begin{align*}
    \lim_{||t|| \to \infty} W(t) &= 0 && a.s.,\\
    \sE W(t) &=1 &&\text{for all } t \in \R^d,\\
    \text{and} \quad  \sE\left\{\textstyle\max_{t \in K} W(t)\right\} & < \infty 
    &&\text{for any compact set $K \subset \R^d$.}
  \end{align*}
  Furthermore, let $W$ be Brown-Resnick stationary, i.e., the process $\xi$, defined by
  $$ \xi(t) = \max_{i \in \Nb} U_i W_i(t), \quad t \in \R^d,$$
  is stationary with standard Fr\'echet margins.
  Then, the following assertions hold:
  \begin{enumerate}
   \item The random variables
   \begin{align*}
     \tau_i = \inf \left\{\argsup_{t \in \R^d} W_i(t)\right\} \quad \text{and} 
     \quad \gamma_i = \sup_{t \in \R^d} W_i(t)
   \end{align*}
   are well-defined. 
   Furthermore,
   $\sum_{i \in \Nb} \delta_{(U_i \gamma_i, \tau_i, W_i(\cdot + \tau_i)
     / \gamma_i)}$ is a Poisson point process on $(0,\infty) \times
   \R^d \times C(\R^d)$ with intensity measure $ \Psi(\rd u, \sd t,
   \sd f) = c u^{-2} \sd u \, \sd t \, \sP_{\tilde F}(\rd f)$ for some $c >
   0$ and some probability measure $\sP_{\tilde F}$.
   \item $\xi$ has an M3 representation 
   with $\sP_F(\rd f) = \sP_{\tilde F}(c\sd f)$
   being the probability measure of the shape function $F$.
   The constant $c>0$ is given by
   \begin{align} \label{eq:c}
     c = \left(\int_{\R^d} \int_{C(\R^d)} f(t) \, \sP_{\tilde F}(\rd f) \sd t\right)^{-1}
   \end{align}
   and the probability measure $\sP_{\tilde F}$ is defined by
   $$\sP_{\tilde F}(A) = \frac{\int_0^\infty y \sP(W(\cdot +\tau) / y \in A, \ \tau \in
     K \mid \gamma=y) \, \sP_\gamma(\rd y)} {\int_0^\infty y \sP(\tau \in K
     \mid \gamma=y) \, \sP_\gamma(\rd y)}$$ for any Borel set $A \subset C(\R^d)$
     and any  compact set $K \subset \R^d$, where $\tau$ and $\gamma$ are defined as $\tau_i$ and
   $\gamma_i$, respectively, replacing $W_i$ by $W$, and $\sP_\gamma$
   is the probability measure belonging to $\gamma$.
  \end{enumerate}
\end{thm}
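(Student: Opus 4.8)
The plan is to realise $\xi$, starting from its incremental Poisson representation, in the ``centred‑maximum'' coordinates $\big(U_i\gamma_i,\tau_i,W_i(\cdot+\tau_i)/\gamma_i\big)$, to show that the transformed point process is again Poisson, and to read off its intensity: the scale‑homogeneity of the $U_i$ yields the $u^{-2}\,\sd u$ factor, Brown--Resnick stationarity forces the spatial coordinate to carry Lebesgue measure, the integrability of $\max_{t\in K}W(t)$ makes the remaining factor finite (so it normalises to $\sP_{\tilde F}$), and a final rescaling $(v,s,f)\mapsto(v/c,s,cf)$ turns the representation into M3 form while pinning down $c$ through the Fréchet margins. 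For the preliminaries behind assertion~1: since $W$ is sample‑continuous with $W(t)\to 0$ as $\|t\|\to\infty$, the supremum $\gamma=\sup_{\R^d}W$ is a.s.\ finite and attained, and we may assume $\gamma>0$ a.s.\ (the case $\gamma=0$, i.e.\ $W\equiv0$, being precisely where $\tau$ is undefined and hence implicitly excluded); moreover $\argsup W$ is a non‑empty compact set, so its lexicographic infimum $\tau$ is a well‑defined point of $\R^d$, and the functional $w\mapsto\big(\gamma(w),\tau(w),w(\cdot+\tau(w))/\gamma(w)\big)$ is Borel measurable (the lexicographic‑argmax part being treated as in the proof of Theorem~\ref{thm_conv_mmm}). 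Writing $\Pi=\sum_i\delta_{(U_i,W_i)}$, which by the marking theorem is a PPP on $(0,\infty)\times C(\R^d)$ with intensity $u^{-2}\sd u\,\sP_W(\rd w)$, I apply the measurable map $\Phi(u,w)=\big(u\gamma(w),\tau(w),w(\cdot+\tau(w))/\gamma(w)\big)$; then $\Phi(\Pi)=\sum_i\delta_{(U_i\gamma_i,\tau_i,W_i(\cdot+\tau_i)/\gamma_i)}$ is Poisson with intensity $\Psi:=\Phi_*\big(u^{-2}\sd u\,\sP_W(\rd w)\big)$, and, since $(U_i\gamma_i)\,\big(W_i(\cdot+\tau_i)/\gamma_i\big)(t-\tau_i)=U_iW_i(t)$, one has $\xi(t)=\max_{(v,s,f)\in\Phi(\Pi)}v\,f(t-s)$.

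The core is the identification of $\Psi$. Integrating out $u$ gives $\Psi\big((u_0,\infty)\times A\times B\big)=u_0^{-1}\lambda(A\times B)$, where $\lambda(A\times B):=\sE\big[\gamma\,\1\{\tau\in A,\ W(\cdot+\tau)/\gamma\in B\}\big]$, i.e.\ $\Psi=u^{-2}\sd u\otimes\lambda$, which already contains the $u^{-2}\sd u$ factor. Next, $\lambda$ is translation invariant in its $\R^d$‑coordinate: $\Psi$ is equally the pushforward of the exponent measure $\mu$ of $\xi$ (the intensity of the PPP $\sum_i\delta_{U_iW_i}$) under $g\mapsto\big(\sup g,\ \inf\argsup g,\ g(\cdot+\inf\argsup g)/\sup g\big)$; this map intertwines a spatial shift of $g$ with a shift of the $\R^d$‑coordinate while fixing the other two; and $\mu$ is shift invariant because $\xi$ is stationary (using uniqueness of the exponent measure of a max‑stable process). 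Also $\lambda$ has locally finite first marginal, hence is $\sigma$‑finite: on $\{\tau\in K\}$ we have $\gamma=W(\tau)\le\max_{t\in K}W(t)$, so $\lambda(K\times C(\R^d))\le\sE\{\max_{t\in K}W(t)\}<\infty$ for bounded $K$. Consequently $\lambda=\mathrm{Leb}_{\R^d}\otimes\kappa$ for a finite measure $\kappa$ on $C(\R^d)$; setting $c:=\kappa(C(\R^d))=\lambda\big([0,1)^d\times C(\R^d)\big)=\sE[\gamma\,\1\{\tau\in[0,1)^d\}]$, which is positive (else $\sE\gamma=0$, contradicting $\sE\gamma\ge\sE W(t_0)=1$) and finite by the above, and $\sP_{\tilde F}:=\kappa/c$ --- equivalently $\sP_{\tilde F}(B)=\lambda(K\times B)/\lambda(K\times C(\R^d))$ for any bounded $K$ with $|K|>0$, independent of $K$ by translation invariance, which unfolds precisely to the formula in the statement --- we obtain $\Psi=c\,u^{-2}\sd u\,\sd t\,\sP_{\tilde F}(\rd f)$, proving assertion~1.

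For assertion~2, assertion~1 gives $\xi(t)=\max_{(v,s,f)}v\,f(t-s)$ over a PPP with intensity $\Psi=c\,u^{-2}\sd u\,\sd t\,\sP_{\tilde F}(\rd f)$. The transformation $(v,s,f)\mapsto(v/c,\,s,\,cf)$ leaves $v\,f$, hence $\xi$, unchanged and pushes $\Psi$ forward to $u^{-2}\sd u\,\sd t\,\sP_F(\rd f)$, where $\sP_F$ is the image of $\sP_{\tilde F}$ under $f\mapsto cf$ (the $\sP_{\tilde F}(c\,\sd f)$ of the statement); this exhibits $\xi$ as an M3 process \eqref{def_M3} with shape distribution $\sP_F$. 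To verify the normalisation $\sE\int_{\R^d}F(t)\,\sd t=1$ and formula \eqref{eq:c}, compute $-\log\sP(\xi(t_0)\le x)=\Psi\big(\{(v,s,f):v\,f(t_0-s)>x\}\big)$; integrating out $v$ yields $\tfrac{c}{x}\int_{C(\R^d)}\int_{\R^d}f(t)\,\sd t\,\sP_{\tilde F}(\rd f)$, and since $\xi$ has standard Fréchet margins this equals $1/x$, whence $c=\big(\int_{\R^d}\int_{C(\R^d)}f(t)\,\sP_{\tilde F}(\rd f)\,\sd t\big)^{-1}$, i.e.\ \eqref{eq:c}, and then $\sE\int_{\R^d}F(t)\,\sd t=c\int_{\R^d}\int_{C(\R^d)}f(t)\,\sP_{\tilde F}(\rd f)\,\sd t=1$.

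The two steps I expect to be most delicate both sit in the second paragraph. First, transferring Brown--Resnick stationarity of $\xi$ into translation invariance of $\lambda$: this hinges on cleanly identifying $\Psi$ with the pushforward of the exponent measure $\mu$ (via the factorisation of $\Phi$ through $(u,w)\mapsto uw$) and on invoking the uniqueness of $\mu$ for a max‑stable process. Second, the structure theorem that a $\sigma$‑finite measure on $\R^d\times C(\R^d)$ which is invariant under shifts of the first coordinate, with locally finite first marginal, factorises as $\mathrm{Leb}_{\R^d}\otimes\kappa$; here the bound $\gamma=W(\tau)\le\max_{t\in K}W(t)$ on $\{\tau\in K\}$ together with the integrability hypothesis is exactly what supplies the local finiteness needed to make the disintegration legitimate. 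The Borel measurability of the lexicographic‑argmax functional on $C(\R^d)$ is a further technicality, but it is entirely analogous to what already appears in the proof of Theorem~\ref{thm_conv_mmm}.
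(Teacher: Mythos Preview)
Your argument is correct and follows essentially the same route as the paper. For assertion~1 the paper simply cites \cite[Thm.~14]{kab2009}, whereas you spell out the details of that argument (mapping theorem for PPPs, scale homogeneity yielding the $u^{-2}\sd u$ factor, Brown--Resnick stationarity via shift invariance of the exponent measure yielding the Lebesgue factor, the integrability hypothesis giving local finiteness so that the product decomposition of $\lambda$ is legitimate); for assertion~2 your rescaling $(v,s,f)\mapsto(v/c,s,cf)$, the identification of $c$ through the standard Fr\'echet margin, and the unfolding of $\sP_{\tilde F}(A)=\lambda(K\times A)/\lambda(K\times C(\R^d))$ into the displayed formula match the paper's computation of $\Psi([1,\infty)\times K\times A)$ almost line by line.
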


\begin{proof}
 \begin{enumerate}
  \item Analogously to the proof of \cite[Thm.\ 14]{kab2009}.
  \item From the first part it follows that
        $$\Phi_0 = \sum_{i \in \Nb} \delta_{(U_i \gamma_i / c,\, \tau_i,\,
    c \cdot W_i(\cdot + \tau_i) / \gamma_i)}$$ is a PPP with intensity measure
    $\Psi_0(\rd u, \sd t, \sd f) = u^{-2} \sd u \times \sd t \times \, \sP_F(\rd f)$ where 
    $\sP_F(\rd f) = \sP_{\tilde F}(c \sd f)$.  Hence, $\Phi_0$ is of the same type
    as $\Pi_0$ from the beginning of Section \ref{M3representation} and
        $$ \xi(t) = \max_{(y,s,f) \in \Phi_0} y f(\cdot - s), \quad
    t \in \R^d,$$ is a mixed moving maxima representation.  The
    integrability condition \eqref{assumption_integral} follows from
    the fact that $\xi$ has standard Fr\'echet marginals.  Thus,
        $$ \int_{\R^d} \int_{C(\R^d)} c f(t) \, \sP_{\tilde F}(\rd f) \sd t= 1,$$ which
    implies \eqref{eq:c}.
    In order to calculate $\sP_{\tilde F}$, let $A \in \mathcal B (C(\R^d))$ and
    $K \in \mathcal{B}^d$ be compact. The first part of this Theorem implies that
        $$ \Psi([1, \infty)\times K \times A) = c \cdot |K| \cdot \sP_{\tilde F}(A). $$
        Therefore,
      \begin{align} \label{eq:qlim}
         \sP_{\tilde F}(A) = \frac{ \Psi([1, \infty) \times K \times A)}{ 
             \Psi([1, \infty) \times K \times C(\R^d))},
        \end{align}
        and both the enumerator and the denominator are finite.  For
        the enumerator, we get
        \begin{align*}
              \Psi(&[1, \infty) \times K \times  A)\\
         ={} & \int_0^\infty u^{-2} \int^{\infty}_{u^{-1}} \sP(W(\cdot + \tau) / \gamma \in A, \ \tau \in K \mid \gamma=y) \, \sP_\gamma(\rd y) \sd u\\
         ={} & \int_0^\infty \int^{\infty}_{y^{-1}} u^{-2} \sd u  \cdot \sP(W(\cdot + \tau) / y \in A, \ \tau \in K \mid \gamma=y) \, \sP_\gamma(\rd y)\\
         ={} & \int_0^\infty y \sP(W(\cdot + \tau) / y \in A, \ \tau \in K \mid \gamma=y) \, \sP_\gamma(\rd y).
        \end{align*}
        Thus, by \eqref{eq:qlim},
        \begin{align*}
         \sP_{\tilde F}(A) = \frac{\int_0^\infty y \sP(W(\cdot + \tau) / y \in A, \ \tau \in K \mid \gamma=y) \, \sP_\gamma(\rd y)}{\int_0^\infty y \sP(\tau \in K \mid \gamma=y) \, \sP_\gamma(\rd y)},
        \end{align*}
        which completes the proof.
 \end{enumerate}
  \end{proof}

\section{Outlook: Statistical applications}\label{sec:application}

In univariate extreme value theory, a standard method for estimating
the extreme value parameters fits all data exceeding a high threshold
to a certain Poisson point process. This peaks-over-threshold approach
has been generalized in \cite{roo2006} to the multivariate setting.
Therein, generalized multivariate Pareto distributions are obtained as
the max-limit of some multivariate random vector in the MDA of an
extreme value distribution by conditioning on the event that at least
one of the components is large.  Conditioning on the same extremal
events, the recent contribution \cite{fal2012} analyzes the asymptotic
distribution of exceedance counts of stationary sequences.  \\ Here,
we have suggested conditioning a stochastic process $\eta(t) : t\in
T\}$ in the MDA of a max-stable process $\{\xi(t) : t\in T\}$ such
that it converges to the incremental processes $W$ in \eqref{def_xi}
or the shape functions $F$ in \eqref{def_M3}. In this section we
provide several examples how these theoretical results can be used for
statistical inference. The approach is based on a multivariate
peaks-over-threshold method for max-stable processes, though the
definition of extreme events differs from that in
\cite{roo2006,fal2012}.\\ In the sequel, suppose that
$\eta_1,\dots,\eta_n,$ $n\in\Nb,$ are independent observations of the
random process $\eta$, already normalized to standard Pareto margins.

\subsection{Incremental representation}

For a max-stable process $\xi$ that admits an incremental representation
\begin{align}
  \label{def_xi_again}
  \xi(t) = \max_{i\in\Nb} U_i W_i(t), \quad t\in T,
\end{align}
as in \eqref{def_xi}, the statistical merit of the convergence results
in Theorem \ref{theo_cond_increments_general} and Theorem 
\ref{theo_cond_increments_cont} is the 
``deconvolution'' of $U$ and $W$ which allows to substitute
estimation of $\xi$ by estimation of the process $W$.
As only the single \emph{extreme} events converge to $W$, we define 
the index set of extremal observations as
\begin{align*}
  I_1(n) = \bigl\{ i\in \{1,\dots n\}: \ \eta_i(t_0) > a(n) \bigr\},
\end{align*}
for some fixed $t_0\in T$. The set $\{ \eta_i(\cdot) / \eta_i(t_0) :
\ i \in I_1(n) \}$ then represents a collection of independent random
variables that approximately follow the distribution of $W$. Thus,
once the representation in \eqref{def_xi_again} is known, both
parametric and non-parametric estimation for the process $W$ is
feasible. For statistical inference it is necessary that the number of
extremal observations $|I_1(n)|$ converges to $\infty$, as
$n\to\infty$. This is achieved by choosing the sequence of thresholds
$a(n)$ according to Remark \ref{rem_thres}.

\begin{ex}[Symmetric logistic distribution, cf.\ Example \ref{ex:symm_log}]
The dependence parameter $q\geq 1$ of the symmetric logistic
distribution \eqref{eq:cdf_symm_log} can be estimated by perceiving
the conditional increments of $\eta$ in the MDA as realizations of $W$
and maximizing the likelihood
\begin{align*}
  &\sP\big(W(t_1) \in \rd s_1, \dots W(t_k) \in \rd s_k \,\big|\, q\big) \\
  &=    \left(1+\textstyle\sum_{i=1}^k s_i^{-q}\right)^{1/q -(k+1)}
    \left(\textstyle\prod_{i=1}^k(iq-1)\right)
    \textstyle\prod_{i=0}^k s_i^{-q-1} \ \rd s_1\dots \rd s_k.
\end{align*}
\end{ex}

\begin{ex}[Brown-Resnick processes]
Recall that the Brown-Resnick processes in Example \ref{BRproc} admit
a representation \eqref{def_xi} with log-Gaussian incremental process
$W(t) = \exp\left\{Y(t) - \sigma^2(t) / 2\right\}, t\in\R^d$.  Hence,
standard estimation procedures for Gaussian vectors or processes can
be applied for statistical inference. \cite{eng2012a} explicitly
construct several new estimators of the variogram $\gamma$ based on
the incremental representation, which also covers H\"usler-Reiss
distributions, and they provide some basic performance analyses.
\end{ex}

\subsection{Mixed moving maxima representation}

Similarly, in case of the mixed moving maxima representation
\begin{align}
 M(t) = \max_{i \in \Nb} U_i F_i(t - S_i), \quad t \in \R^d,
\end{align}
the convergence results of Theorem \ref{thm_conv_mmm} can be used to
estimate $F$ (or $F_1 = F / \lambda$) on some compact domain $K$
instead of estimating $M$ directly.  Here, the index set $T$ of the
observed processes $\{\eta_i(t): \ t \in T\}$, $i=1,\ldots, n$, can be
identified with $\Q \oplus L$ from Theorem \ref{thm_conv_mmm}.  The
set $L$ should be sufficiently large such that it is reasonable to
assume that the components $\{U_i F_i(\cdot - S_i): \ S_i \notin \Q
\oplus L\}$ hardly affect the process $M$ on $\Q \oplus K$ (that is,
$\mu(C_B \cap A_L) / \mu(A_L) \approx \sP(F(\cdot) - \lambda \in B)$
in the proof of Theorem \ref{thm_conv_mmm}).  At the same time, a
large set $\Q$ leads to a rich set of usable observations $ \widetilde
F_1^{(i)} = \eta_i(\tau_\Q(\eta_i)+\cdot) / \eta_i(\tau_\Q(\eta_i)),
\ i \in I_2(n),$ where
$$ I_2(n) = \left\{ i \in \{1,\ldots,n\}: \ \max_{t\in \Q}\eta_i(t) =
\max_{t \in \Q \oplus L} \eta_i(t) \geq a(n)\right\}.$$ The resulting
processes $\widetilde F_1^{(i)},\ i \in I_2(n),$ can be interpreted as
independent samples from an approximation to $F_1$.  This approach can
be expected to be particularly promising in case of F having a simple
distribution or even being deterministic.

\begin{ex}[M3 processes with deterministic shape functions]
Some examples of mixed moving maxima processes have already been
analyzed for statistical inference by \cite{deh2006} who use normal,
exponential and t densities as shape functions. More precisely, they
consider M3 models with
\begin{align}
 F_1(t) &= \exp\left\{- \frac{\beta^2 t^2} 2\right\}, \quad  &\lambda {}={}& \frac{\beta}{\sqrt{2\pi}}, \label{eq:mmm-ex-1}\\
 F_1(t) &= \exp\left\{- \beta |t|\right\}, \quad  &\lambda {}={}& \frac{\beta}{2}, \label{eq:mmm-ex-2}\\
 \text{and} \quad F_1(t) &= \left( 1 + \frac{\beta^2 t^2}{\nu} \right)^{- \frac{\nu+1} 2}, \quad 
 &\lambda {}={}& \frac{\beta \Gamma\left( \frac{\nu+1} 2\right)}{\sqrt{\pi\nu} \Gamma\left(\frac \nu 2\right)}\, \quad \nu > 0, \label{eq:mmm-ex-3}
\end{align}
all parametrized by $\beta > 0$. \cite{deh2006} introduce
consistent and asymptotically normal estimators based on the interpretation of
$\beta$ as a dependence parameter. From the samples $\widetilde F_1^{(i)}$, $i \in I_2(n)$,
we get a new estimator
\begin{equation*}
  \widehat F_1 = \frac 1 {|I_2(n)|} \sum_{i \in I_2} \widetilde F_1^{(i)}
\end{equation*}
for $F_1$. Applying this estimator, $\beta$ can be estimated by a least squares fit
of \eqref{eq:mmm-ex-1}--\eqref{eq:mmm-ex-3} to $\widehat F_1$ at some locations $t_1, \ldots t_m \in K$.
Note that in case of the normal model \eqref{eq:mmm-ex-1} and the exponential model \eqref{eq:mmm-ex-2}, the
logarithm of the shape function $F_1$ depends linearly on $\beta^2$ and $\beta$,
respectively, and $\log \widehat F_1$ can be fitted by ordinary least
squares.
\end{ex}

\begin{ex}[Brown-Resnick processes]
The mixed moving maxima representation can also be employed for
estimation of Brown-Resnick processes although the distribution of $F$
is much more sophisticated than the one of $W$ in the incremental
representation (cf. \cite{eng2011, oes2012}). A relation between the
shape function $F$ and the variogram $\gamma$ of the Brown-Resnick
process can be obtained via the \emph{extremal coefficient function}
$\theta(\cdot)$. For a stationary, max-stable process $\xi$ with identically distributed marginals, \cite{sch2003} defined 
the extremal coefficient function $\theta$
via the relation
\begin{align*}
  \sP(\xi(0)\leq u,\, \xi(h) \leq u) = \sP(\xi(0) \leq u)^{\theta(h)}, 
  \quad h \in \R^d.
\end{align*}
For mixed moving maxima processes, we have
\begin{align} \label{eq:theta-mmm}
  \theta(h)  
  =\sE\left. \int_{\R^d} \{ F(t) \vee F(t+h)\} \sd t\right.
  =\frac{\sE\left. \int_{\R^d} \{ F_1(t) \vee F_1(t+h)\} \sd t\right.}{
    \sE\left.\int_{\R^d} F_1(t) \sd t\right.}
\end{align}
and, at the same time, for Brown-Resnick processes \cite{kab2009},
\begin{align} \label{eq:theta-br}
 \theta(h) = 2\Phi\left(\sqrt{\gamma(h)}/ 2\right),
\end{align}
where $\Phi$ is the standard Gaussian distribution function.
Identifying \eqref{eq:theta-mmm} with \eqref{eq:theta-br} and plugging
in the samples $\widetilde F_1^{(i)}$, $i \in I_2(n)$, we get the variogram estimator
\begin{align*}
 \widehat \gamma(h) = 
 \left\{2 \Phi^{-1}\left( 
 \frac{\displaystyle \sum_{i \in I_2(n)} \int_{\widetilde K} \widetilde F_1^{(i)}(t) 
     \vee \widetilde F_1^{(i)}(t+h) \sd t}{2 \displaystyle \sum_{i \in I_2(n)} \int_{\widetilde K} \widetilde F_1^{(i)}(t) \sd t} 
 \right)\right\}^2,
\end{align*}
where $\widetilde K$ is a large set such that $\widetilde K, \widetilde K +h \subset K$.
\end{ex}

\appendix

\section{Auxiliary Results for the Proof of Theorem \ref{thm_conv_mmm}}

\begin{lem} \label{lem:AL}
$A_L$ is closed. The set $A_L^*$ is in the interior of $A_L$.
\end{lem}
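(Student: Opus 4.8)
The plan is to argue purely topologically inside $C(\Q\oplus L)$, observing first that, since $\Q$ and $L$ are compact, $\Q\oplus L$ is compact, so this space carries simply the topology induced by the sup-norm $\|g\|_\infty=\max_{t\in\Q\oplus L}|g(t)|$. For the closedness of $A_L$ I would note that the two functionals $g\mapsto\max_{t\in\Q}g(t)$ and $g\mapsto\max_{t\in\Q\oplus L}g(t)$ are both $1$-Lipschitz with respect to $\|\cdot\|_\infty$, hence continuous; consequently $A_L$ is the intersection of the closed half-space $\{g:\max_{t\in\Q}g(t)\geq1\}$ with the zero set $\{g:\max_{t\in\Q\oplus L}g(t)-\max_{t\in\Q}g(t)=0\}$ of a continuous functional, and therefore closed. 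This step is immediate.

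For the interior statement I would fix $f\in A_L^*$, put $t^\ast=\tau_\Q(f|_\Q)$ and $M=f(t^\ast)=\max_{t\in\Q}f(t)$. By the definition of $A_L^*$ we have $t^\ast\in\Q^o$ and $f(t^\ast)>\max\{1,f(t)\}$ for every $t\in\Q\oplus L\setminus\{t^\ast\}$; in particular $t^\ast$ is the \emph{unique} maximizer of $f$ over the compact set $\Q\oplus L$, and $M>1$. Since $t^\ast\in\Q^o$, pick $r>0$ with $\overline{B}(t^\ast,r)\subset\Q^o$, and set $M'=\max_{t\in(\Q\oplus L)\setminus B(t^\ast,r)}f(t)$; as this is a maximum over a compact set not containing the unique maximizer, $M'<M$. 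Finally put $\varepsilon=\tfrac13\min\{M-M',\,M-1\}>0$.

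The claim I would then establish is that every $g$ with $\|g-f\|_\infty<\varepsilon$ lies in $A_L$, which exhibits $f$ as an interior point and hence yields $A_L^*\subset\mathrm{int}\,A_L$. For such a $g$ one has $g(t^\ast)>M-\varepsilon\geq\tfrac13(2M+1)>1$, so $\max_{t\in\Q}g(t)\geq g(t^\ast)>1$; and for every $t\in(\Q\oplus L)\setminus B(t^\ast,r)$ one has $g(t)<M'+\varepsilon<M-\varepsilon<g(t^\ast)$, using $(M-M')-2\varepsilon\geq\tfrac13(M-M')>0$. Hence every maximizer of $g$ over $\Q\oplus L$ lies in $B(t^\ast,r)\subset\Q$, which forces $\max_{t\in\Q\oplus L}g(t)=\max_{t\in\Q}g(t)$, so both defining conditions of $A_L$ hold for $g$.

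No real obstacle is anticipated; this is a routine argument. The only point that genuinely uses the precise form of $A_L^*$ is that the condition $\tau_\Q(f|_\Q)\in\Q^o$ keeps the near-maximizer of every small uniform perturbation strictly inside $\Q$, which is exactly what prevents a perturbation from breaking the equality $\max_{\Q}=\max_{\Q\oplus L}$ by creating a larger value on $(\Q\oplus L)\setminus\Q$; the strict-maximum requirement handles everything else, and the role of $\tau_\Q$ (and its lexicographic tie-breaking) is inessential here since $A_L^*$ already forces the maximizer over $\Q$ to be unique.
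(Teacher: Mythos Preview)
Your proof is correct and follows essentially the same route as the paper's: for closedness you invoke continuity of the max functionals (which the paper simply calls ``obvious''), and for the interior claim you pick a small ball around the unique maximizer lying in $\Q^o$, bound the gap to the complement, and choose $\varepsilon$ small enough to preserve both defining conditions of $A_L$ under uniform perturbation. The only differences are cosmetic (your $M,M',r,\tfrac13\min$ versus the paper's $\alpha,\beta,\delta,\tfrac12\min$).
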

\begin{proof}
 The first assertion is obvious. For the second one, let $f^* \in
 A_L^*$. Then, we have $f^*(\tau_\Q(f^*|_\Q)) =: \alpha >
 1$. Furthermore, there is $\delta > 0$ such that
 $B_\delta(\tau_\Q(f^*|_\Q)) = \{t \in \R^d: \ ||t - \tau_\Q(f|_\Q)||
 < \delta \} \in \Q^o$ and we have
 \begin{equation}
  \beta := \sup_{t \in \Q \oplus L \setminus B_\delta(\tau_\Q(f^*|_\Q))} f^*(t) - \max_{t\in \Q}f^*(t) < 0.
 \end{equation}
 Now, we choose $\varepsilon < \min \{ \frac {\alpha-1} 2, \frac {|\beta|} 2\}$ and show that $B_\varepsilon(f^*) = \{ f \in C(\Q \oplus L):\ ||f - f^*||_\infty < \varepsilon\} \subset A_L$.
 This holds, as for any $f \in B_\varepsilon(f^*)$, we have
 \begin{align*}
  & f(\tau_\Q(f|_\Q)) \geq f(\tau_\Q(f^*|_\Q)) > \alpha - \varepsilon > \frac {1+\alpha }2 > 1\\
  \text{and} \quad & \max_{t\in \Q}f(t) \leq \max_{t \in \Q\oplus L} f(t) =  \max\left\{\max_{t \in \Q \oplus L \setminus \Q^o} f(t), \ \max_{t\in \Q}f(t)\right\}\\
                   & {}\leq{} \max\left\{ \beta + \alpha + \varepsilon, \ \max_{t\in \Q}f(t)\right\} {}\leq{} \max\left\{ \alpha - \varepsilon, \ \max_{t\in \Q}f(t)\right\} {}={} \max_{t\in \Q}f(t),
 \end{align*}
 which means equality.
  \end{proof}

\begin{lem} \label{lem:interCB}
 The set $C_B^*$  is in the interior of $C_B$.
\end{lem}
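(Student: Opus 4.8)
The plan is to show that $C_B^*$ is open and contained in $C_B$, which together establish that $C_B^* \subset (C_B)^o$. The containment $C_B^* \subseteq C_B$ is immediate: if $f \in C_B^*$, then $\argmax_{t\in\Q} f(t)$ is a singleton, so $\tau_\Q(f|_\Q)$ is the unique maximizer, and $f(\tau_\Q(f|_\Q)+\cdot)/f(\tau_\Q(f|_\Q)) \in B^o \subseteq B$, hence $f \in C_B$. So the real content is showing that $C_B^*$ is an \emph{open} subset of $C(\Q\oplus L)$.

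First I would fix $f^* \in C_B^*$ and write $\tau^* = \tau_\Q(f^*|_\Q)$ for its unique argmax over $\Q$, with $\alpha^* := f^*(\tau^*) = \max_{t\in\Q} f^*(t) > 0$ (strict positivity since $\max_{t\in\Q} M(t)$ is Fr\'echet and, more prosaically, because $B^o$ lives in $C(K)$ which does not contain functions blowing up). The key point is that the argmax map is, in a suitable sense, continuous near functions with a \emph{unique, well-separated} maximizer. Concretely: since $\tau^*$ is the unique maximizer and $\Q$ is compact, for every $\rho > 0$ there is a gap $\beta_\rho := \alpha^* - \sup\{ f^*(t): t\in\Q,\ \|t-\tau^*\| \geq \rho\} > 0$. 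Choosing $\varepsilon < \beta_\rho/2$, any $f$ with $\|f - f^*\|_\infty < \varepsilon$ satisfies $\max_{t\in\Q} f(t) > \alpha^* - \varepsilon$ while $f(t) < \alpha^* - \beta_\rho + \varepsilon < \alpha^* - \varepsilon$ for $\|t-\tau^*\|\geq\rho$, so every maximizer of $f$ over $\Q$ lies in $B_\rho(\tau^*)$; in particular $\argmax_{t\in\Q} f(t)$ is contained in an arbitrarily small ball, and $\tau_\Q(f|_\Q) \to \tau^*$ as $f \to f^*$. This also shows $\argmax_{t\in\Q}f(t)$ is nonempty and that $|f(\tau_\Q(f|_\Q)) - \alpha^*|$ is small, so the denominator stays bounded away from $0$.

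Next I would transfer this to the normalized-shifted function. Write $\Lambda(f) := f(\tau_\Q(f|_\Q)+\cdot)/f(\tau_\Q(f|_\Q)) \in C(K)$. Given $\delta > 0$, I want $\|\Lambda(f) - \Lambda(f^*)\|_{\infty,K} < \delta$ whenever $\|f-f^*\|_\infty$ is small enough. Split into the shift error and the value error: $\Lambda(f)(s) - \Lambda(f^*)(s) = \big[f(\tau_\Q(f|_\Q)+s) - f^*(\tau^*+s)\big]/f(\tau_\Q(f|_\Q)) + f^*(\tau^*+s)\big[1/f(\tau_\Q(f|_\Q)) - 1/f^*(\tau^*)\big]$. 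Uniform continuity of $f^*$ on the compact set $\Q\oplus L$ controls $|f^*(\tau_\Q(f|_\Q)+s) - f^*(\tau^*+s)|$ via $\|\tau_\Q(f|_\Q) - \tau^*\|$, which we made small; $\|f - f^*\|_\infty$ controls $|f(\tau_\Q(f|_\Q)+s) - f^*(\tau_\Q(f|_\Q)+s)|$ directly; and the denominators are bounded below by $\alpha^*/2$, say. Hence $\Lambda$ is continuous at $f^*$ restricted to the set where $\argmax_{t\in\Q}$ is a singleton. Since $\Lambda(f^*) \in B^o$ and $B^o$ is open in $C(K)$, for $\|f-f^*\|_\infty$ small we get $\Lambda(f) \in B^o$; combined with the argmax being a singleton for such $f$, this gives $f \in C_B^*$. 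Therefore a whole $\|\cdot\|_\infty$-ball around $f^*$ lies in $C_B^*$, proving $C_B^*$ open.

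The main obstacle is the continuity of $\Lambda$, precisely because $\tau_\Q$ is genuinely \emph{discontinuous} on all of $C(\Q)$ (the lexicographic tie-breaking jumps when the maximizer set has more than one point); the argument only works because we restrict to the subclass with a unique maximizer, and there one must carefully quantify, using compactness of $\Q$ and uniform continuity of the limit function $f^*$, that nearby functions have their (possibly still non-unique) argmax concentrated near $\tau^*$. I would present this as the lemma: $\tau_\Q$ is continuous at every $f^*$ whose restriction to $\Q$ has a unique maximizer, and consequently so is $\Lambda$, from which openness of $C_B^*$ follows routinely as above.
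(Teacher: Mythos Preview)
Your core analytic argument is correct and is essentially the paper's: the gap estimate $\beta_\rho > 0$ forces $\tau_\Q(f|_\Q)$ close to $\tau^*$ for $f$ uniformly close to $f^*$, and then uniform continuity of $f^*$ together with a three-term decomposition of $\Lambda(f) - \Lambda(f^*)$ shows $\Lambda(f) \to \Lambda(f^*)$. The paper carries out exactly this split (shift error, denominator error, pointwise error) to land $\Lambda(f_0)$ inside $B$ for $f_0$ in a small ball around $f^*$.

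However, your \emph{framing} overshoots: you set out to prove that $C_B^*$ is open, and at the end you assert that ``the argmax [is] a singleton for such $f$''. Neither claim is true in general. If $f^*$ has a strict maximum at $\tau^*$, you can flatten the peak over a shrinking neighborhood to produce $f_n$ with $\|f_n - f^*\|_\infty \to 0$ but with a whole plateau of maximizers; so arbitrarily close to $f^*$ there are functions outside $C_B^*$, and $C_B^*$ is not open.

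Fortunately the lemma does not ask for this. It asks only that $C_B^* \subset (C_B)^o$, i.e., that around each $f^* \in C_B^*$ there is a ball contained in $C_B$. Your argument for continuity of $\tau_\Q$ at $f^*$ already applies to \emph{all} nearby $f$, not only those with unique maximizer: the gap estimate shows every maximizer of $f$ lies in $B_\rho(\tau^*)$, hence in particular $\tau_\Q(f|_\Q) \in B_\rho(\tau^*)$, regardless of uniqueness. So drop the unnecessary restriction ``restricted to the set where $\argmax$ is a singleton'' and the unjustified conclusion ``$f \in C_B^*$''; conclude instead that $\Lambda(f) \in B^o \subset B$, hence $f \in C_B$, for every $f$ in a sufficiently small ball around $f^*$. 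That is precisely the paper's proof.
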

\begin{proof}
 Let $f^* \in C_B^*$.  Then, $t^* = \argmax_{t \in \Q} f^*(t)$ is well-defined and necessarily, as $f \geq 0$, 
 \begin{equation} \label{eq:alpha}
   \alpha := f^*(t^*)  \in (0,||f^*||_\infty].  
 \end{equation}
 Since $f^*(t^* + \cdot) / f^*(t^*) \in B^o$, there is some $\varepsilon > 0$ such that
 \begin{equation} \label{eq:ball}
 \left\{f \in C(K): \ \left|\left|f^*(t^* + \cdot) \Big/ f^*(t^*) - f\right|\right|_\infty < \varepsilon\right\} \subset B. 
 \end{equation}
 Furthermore, $f^*$ is uniformly continuous on the compact set $\Q \oplus L$, i.e.\ there exists some $\delta > 0$ such that
 \begin{equation} \label{eq:unifcont}
  \sup_{s,t \in \Q \oplus L, \ ||s-t|| < \delta} |f^*(s) - f^*(t)| < \frac \varepsilon 3 \alpha.
 \end{equation}
 Then, as $\argmax_{t \in \Q} f^*(t)$ is unique, we have that
 \begin{equation} \label{eq:beta}
  \beta := \max_{t \in \Q \setminus \{t \in \R^d: \ ||t-t^*|| < \delta\}} f^*(t)  - f^*(t^*) \in [-\alpha,0).
 \end{equation}
 Choose $\varepsilon^* < \min\left\{\frac {|\beta|} {2\alpha}, \frac \varepsilon 6 \frac \alpha {||f^*||_\infty}\right\}$.
 We will show that $B_{\varepsilon^*\alpha}(f^*) =\{f \in C(\Q \oplus L): \ ||f-f^*||_\infty < \varepsilon^*\alpha\} \subset C_B$.
 To this end, let $f_0 \in B_{\varepsilon^*\alpha}(f^*)$. Then, because of Equation \eqref{eq:beta} and $\varepsilon^*\alpha < \frac {|\beta|} 2$, we have that
 $||t_0 - t^*|| \leq \delta$ for $t_0 = \tau_\Q(f_0|_\Q)$.
 Therefore,
 \begin{align}
  & \sup_{t \in K} \left| \frac{f^*(t^*+ \cdot)}{f^*(t^*)} - \frac{f_0(t_0 + \cdot)}{f_0(t_0)} \right| \notag\\
 \leq{} & \sup_{t \in K} \left| \frac{f^*(t^*+\cdot)}{f^*(t^*)} - \frac{f^*(t_0+\cdot)}{f^*(t^*)}\right| 
        + \sup_{t \in K} \left| \frac{f^*(t_0+\cdot)}{f^*(t^*)} - \frac{f^*(t_0+\cdot)}{f_0(t_0)}\right| \notag\\
      &  + \sup_{t \in K} \left| \frac{f^*(t_0+\cdot)}{f_0(t_0)} - \frac{f_0(t_0+\cdot)}{f_0(t_0)}\right| \qquad
 \leq \qquad  \frac \varepsilon 3 + \frac{\varepsilon^*}{1-\varepsilon^*} \frac{||f^*||_\infty}{\alpha} + \frac{\varepsilon^*}{1-\varepsilon^*}, \label{eq:dist}
 \end{align}
 where we used Equation \eqref{eq:unifcont} and the fact that $f_0 \in B_{\varepsilon^*\alpha}(f^*)$.
 Equations \eqref{eq:alpha} and \eqref{eq:beta} and the choice of $\varepsilon^*$ yield that
 $\varepsilon^*/(1-\varepsilon^*) \leq (\varepsilon^* ||f^*||_\infty)/((1-\varepsilon^*)\alpha) \leq  2\varepsilon^*||f^*||_\infty / \alpha  < \varepsilon/3$,
 i.e.\ each summand on the right-hand side of \eqref{eq:dist} is smaller than $\varepsilon/3$. Thus,
$f_0(t_0 + \cdot) / f_0(t_0) \in \{f \in C(K): \ ||f^*(t^* + \cdot) / f^*(t^*) - f||_\infty < \varepsilon\} \subset B$ by Equation
\eqref{eq:ball} and $f_0 \in C_B$.
  \end{proof}

\begin{lem} \label{lem:CB}
 The closure of $C_B$ is a subset of 
 \begin{align*}
 B^* \cup{} & \left\{f \in C(\Q \oplus L): \, \argmax_{t \in \Q} f(t) \text{ is not unique} \right\}\\
     \cup{} & \left\{f \in C(\Q \oplus L): \, f(\tau_\Q(f|_\Q) + \cdot) \Big/ f(\tau_\Q(f|_\Q)) \in \partial B \right\}.
 \end{align*}
\end{lem}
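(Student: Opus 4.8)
The plan is to take $f$ in the closure of $C_B$ together with a sequence $(f_n)_{n\in\Nb}$ in $C_B$ with $\|f_n - f\|_\infty \to 0$, and to distinguish two cases according to whether $\argmax_{t \in \Q} f(t)$ is a singleton. If it is not, then $f$ lies in the second set of the union and there is nothing to prove. So suppose the maximum of $f$ over $\Q$ is attained only at $t^* := \tau_\Q(f|_\Q)$; note that then $f(t^*) = \max_{t\in\Q} f(t) > 0$, since $f \geq 0$ and $f \equiv 0$ on $\Q$ would make every point of $\Q$ a maximizer.

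First I would show that $\tau_\Q(f_n|_\Q) \to t^*$. This relies on the standard fact that the argmax is continuous under uniform convergence when the limiting maximizer is unique: if some subsequence of $t_n := \tau_\Q(f_n|_\Q)$ converged to $t' \in \Q$, then $f_n(t_n) \geq f_n(t^*) \to f(t^*)$, while along that subsequence $f_n(t_n) \to f(t')$ by uniform convergence and continuity of $f$; hence $f(t') \geq f(t^*) = \max_\Q f$, which forces $t' = t^*$, and compactness of $\Q$ upgrades this to convergence of the whole sequence. Here one must be slightly careful that it is the entire set of maximizers of $f_n$ over $\Q$ — not just its lexicographic infimum — that collapses to $t^*$, so that the tie-break in the definition of $\tau_\Q$ is harmless.

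Next I would transfer this to the normalized shifted functions. Since $K \cup \{\vec 0\} \subset L$, we have $t_n + K \subset \Q \oplus L$ and $t^* + K \subset \Q \oplus L$, so $f_n(t_n + \cdot)$ and $f(t^* + \cdot)$ are well defined on $K$; combining $\|f_n - f\|_\infty \to 0$, the uniform continuity of $f$ on the compact set $\Q \oplus L$, and $t_n \to t^*$ yields $f_n(t_n + \cdot) \to f(t^* + \cdot)$ uniformly on $K$ as well as $f_n(t_n) \to f(t^*) > 0$. Therefore
\begin{equation*}
 \frac{f_n\big(\tau_\Q(f_n|_\Q) + \cdot\big)}{f_n(\tau_\Q(f_n|_\Q))} \longrightarrow \frac{f\big(\tau_\Q(f|_\Q) + \cdot\big)}{f(\tau_\Q(f|_\Q))} \qquad \text{in } C(K),
\end{equation*}
and since every term on the left lies in $B$ (because $f_n \in C_B$), the limit lies in $\overline B = B^o \cup \partial B$. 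If it lies in $B^o$, then $f$ satisfies both the uniqueness of its $\Q$-maximizer and the interior membership, i.e.\ $f$ lies in $C_B^*$, the first set of the displayed union; if it lies in $\partial B$, then $f$ belongs to the third set. This exhausts all cases.

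I expect the main obstacle to be the first step: making the continuity argument for $\tau_\Q$ rigorous while respecting its lexicographic definition, together with the bookkeeping needed to keep every shifted evaluation point inside $\Q \oplus L$, the region on which uniform convergence of $(f_n)$ is available.
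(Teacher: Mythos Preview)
Your proposal is correct and follows essentially the same route as the paper: assume the $\Q$-maximizer of the limit $f$ is unique, deduce that the normalized shifted functions $f_n(\tau_\Q(f_n|_\Q)+\cdot)/f_n(\tau_\Q(f_n|_\Q))$ converge in $C(K)$ to $f(\tau_\Q(f|_\Q)+\cdot)/f(\tau_\Q(f|_\Q))$, and conclude via closedness of $\overline B = B^o \cup \partial B$. The only stylistic difference is that the paper compresses your argmax-continuity and uniform-continuity steps into a single appeal to the estimates already carried out in the proof of Lemma~\ref{lem:interCB}, whereas you spell them out directly.
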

\begin{proof}
 Let $\{f_n\} \subset C_B$ be a sequence converging uniformly to some $f^* \in C(\Q\oplus L)$. We have to verify that $f^*(\tau_\Q(f^*|_\Q) + \cdot) / f^*(\tau_\Q(f^*|_\Q)) \in B \cup \partial B$
 if $\argmax_{t \in \Q} f^*(t)$ is unique.
 Analogously to the proof of Lemma \ref{lem:interCB} we can show that for any $\varepsilon_2 > 0$ there is some $\varepsilon_1 > 0$ such that
 \begin{align*}
  & ||f-f^*||_{\infty,\Q \oplus L} < \varepsilon_1\\
  {}\Longrightarrow{} \quad &  \left|\left|\frac{f(\tau_\Q(f|_\Q)+\cdot)}{f(\tau_\Q(f|_\Q))} - \frac{f^*(\tau_\Q(f^*|_\Q)+\cdot)}{f^*(\tau_\Q(f^*|_\Q))}\right|\right|_{\infty,K} < \varepsilon_2.
 \end{align*}
 Thus, $f_n(\tau_\Q(f_n|_\Q) + \cdot)/f_n(\tau_\Q(f_n|_\Q))$ converges to $f^*(\tau_\Q(f^*|_\Q) + \cdot)/f^*(\tau_\Q(f^*|_\Q))$ in $C(K)$. Hence, as $B \cup \partial B$ is closed,
 $f^*(\tau_\Q(f^*|_\Q) + \cdot)/f^*(\tau_\Q(f^*|_\Q)) \in B \cup \partial B$.
  \end{proof}

\section{Proof of Example \ref{ex:mmm-mda}}\label{sec:proof_ex_mmm-mda}

\begin{proof}[Proof of Example \ref{ex:mmm-mda}]
Let $\tilde M_j$, $j\in\Nb$, be independent copies of the process
$\tilde M$ and consider 
$$M_n(\cdot) = \frac{1}{cn}  \max_{i=1}^n \tilde M_i(\cdot).$$
Further, suppose that $L \subset \R^d$ is an arbitrary compact set.
Note that by Remark \ref{weak_conv_Rd} it suffices to show weak convergence
of $M_n \cvgdist M$, $n\to\infty$, on $C(L)$.

\medskip

To prove the first assertion, note that,
for $t_0, \ldots, t_k\in\R^d$, $s_0, \ldots, s_k\geq 0$, $k\in\Nb$,
we have
\begin{align}
&\sP(M_n(t_0)\leq s_0, \ldots, M_n(t_k)\leq s_k) \notag\\
&= \left[\1_{\kappa \leq \min_{l=0}^k cns_l }\cdot \sP\left\{ \Pi_{c, \epsilon} 
  \left(\left\{(u,t,f): 
  \max_{l=0}^k u f(t_l-t)/(cns_l) > 1\right\}\right) = 0\right\}\right]^n \notag\\
&= \1_{\kappa \leq \min_{l=0}^k cns_l }\cdot \exp\left(- n\int_{C(\R^d)} \int_{\R^d} 
\min\left\{\frac{1}{\epsilon}, \max_{l=0}^k \frac{f(t_l-t)}{cns_l}\right\}\, c\sd t \, \sP_F(\rd f) \right)\notag\\
&\longrightarrow 
\exp\left(- \int_{C(\R^d)} \int_{\R^d} 
\max_{l=0}^k (f(t_l-t)/s_l) \sd t \, \sP_F(\rd f) \right), \label{proofMDAM3}
\end{align}
as $n\to\infty$, where the convergence holds due to monotone
convergence.  The right-hand side of \eqref{proofMDAM3} coincides
with the marginal distribution of $M$ (cf.\ \eqref{M3_marginal}).

\medskip

For convergence of $M_n$ to $M$ in the sense of weak convergence in $C^+(L)$ endowed with the
  topology of uniform convergence, it remains to show that the sequence of restricted processes $\{M_n|_L : n\in \Nb\}$ is tight.
  To this end, by \cite[Thm. 7.3]{bil1999}, it suffices to verify
  that for any $\varepsilon > 0$, $\eta \in (0,1)$, there exist $\delta > 0$, $n_0 \in \Nb$ such that
  $$ \sP\left\{ \sup_{||s-t|| < \delta} |M_n(s) - M_n(t)| \geq \varepsilon\right\} \leq \eta, \qquad n \geq n_0.$$
  By Equation \eqref{eq:sup-integrability-cont}, we can choose
  $R > 0$ such that 
  \begin{equation} \label{eq:smallintegral}
   \int_{\R^d \setminus (L \oplus B_R(\0))} \sE\left( \sup_{t \in L} F(t-s) \right) \sd s < \frac {\varepsilon \eta} 2,
  \end{equation}
  where $B_R(\0) = \{ x \in \R^d: \ ||x|| \leq R\}$.
  Furthermore, \eqref{eq:sup-integrability-cont} implies that $\sE\left( \sup_{t \in K} F(s)\right) < \infty$ for any compact set $K \subset \R^d$.
  Therefore, as each realization of $F$ is uniformly continuous on $B_{R+d(L)}(\0)$, where $d(L) = \sup_{s_1,s_2 \in L} ||s_1-s_2||$ denotes the diameter of $L$,
  dominated convergence yields
  \begin{equation*}
   \lim_{\delta \searrow 0} \sE\left( \sup_{s,t \in B_{R+d(L)}(\0), \ ||s-t|| < \delta} |F(s) - F(t)|\right) = 0.
  \end{equation*}
  In particular, we can choose $\delta > 0$ such that
  \begin{equation} \label{eq:delta}
   \sE\left( \sup_{s_1,s_2 \in B_{R+d(L)}(\0), \ ||s_1-s_2|| < \delta} |F(s_1) - F(s_2)|\right) < \frac {\varepsilon \eta}{2 |L \oplus B_R(\0)|}.
  \end{equation}
  Then, we get
  \begin{align*}
   & \sP\left\{ \sup_{||s_1-s_2|| < \delta, \ s_1,s_2 \in L} | M_n(s_1) - M_n(s_2) | \geq \varepsilon \right\} \\
   \leq{} & n \sP\left\{ \sup_{||s_1-s_2|| < \delta, \ s_1,s_2 \in L} |\tilde M_n(s_1) - \tilde M_n(s_2) | \geq c n\varepsilon \right\} \\
   \leq{}& n \bigg( \sP\bigg\{ \Pi\bigg(\bigg\{ (u,t,f): \ t \in L \oplus B_R(\0), \\
             & \hspace{3cm} \sup_{\substack{s_1,s_2 \in B_{R+d(L)}(\0),\\ ||s_1-s_2|| < \delta} } |f(s_1)-f(s_2)| > \frac{cn\varepsilon}{u}
             \bigg\}\bigg) > 0\bigg\}\\
         & + \sP\bigg\{ \Pi\bigg(\bigg\{ (u,t,f): \ t \in \R^d \setminus (L \oplus B_R(\0)), %\\
             %& \hspace{3cm} 
\sup_{s \in L} |f(s-t)|  > \frac{cn\varepsilon}{u}  \bigg\} \bigg) > 0\bigg\}\bigg) \displaybreak[0]\\
   \leq{} & n\bigg( 1 - \exp\bigg\{ - \int_{L \oplus B_R(\0)} \int_\epsilon^\infty u^{-2} \\
       & \hspace{3.2cm}  \cdot \sP\bigg( \sup_{\substack{s_1,s_2 \in B_{R+d(L)}(\0),\\ ||s_1-s_2|| < \delta} } |F(s_1)-F(s_2)| > \frac{c n\varepsilon}{u}\bigg) \sd u \, c \sd t \bigg\}\\
       & +  1- \exp\bigg\{ - \int_{\R^d \setminus (L \oplus B_R(\0))} \int_\epsilon^\infty u^{-2} 
        \sP\bigg( \sup_{s \in L} |F(s-t)| > \frac{c n\varepsilon}{u}\bigg) \sd u \, c \sd t \bigg\} \bigg) \displaybreak[0]\\
   \leq{} & n\bigg( 1 - \exp\bigg( - \frac{|L \oplus B_R(\0)|}{n \varepsilon} 
                      \sE\bigg\{ \sup_{\substack{s_1,s_2 \in B_{R+d(L)}(\0),\\ ||s_1-s_2|| < \delta} } |F(s_1)-F(s_2)| \bigg\} \bigg)\\
            & + 1 - \exp\bigg(- \frac 1 {n\varepsilon} \int_{\R \setminus (L \oplus B_R(\0))} \sE\left\{ \sup_{s \in L} |F(s-t)| \right\} \sd t \bigg) \bigg)\\
   \leq{} & n\left( 1 - \exp\left(- \frac \eta {2n}\right) + 1 -  \exp\left(- \frac \eta {2n}\right)\right) \quad \leq \quad \eta,
  \end{align*}
  where we used Equation \eqref{eq:delta} and \eqref{eq:smallintegral}.
  Thus, the sequence of processes $\{M_n|_L : n\in \Nb\}$ is tight.
\end{proof}

\acks The authors are grateful to Zakhar Kabluchko for useful
suggestions and hints.  S. Engelke has been financially supported by
Deutsche Telekom Stiftung. A. Malinowski has been financially
supported the German Science Foundation (DFG), Research Training Group
1644 `Scaling problems in Statistics'.  M. Oesting and M. Schlather
have been financially supported by Volkswagen Stiftung within the
project `WEX-MOP'.

\bibliographystyle{apt}  
\small
\bibliography{HREstimation}{}

\end{document}